\providecommand{\U}[1]{\protect\rule{.1in}{.1in}}
\providecommand{\U}[1]{\protect\rule{.1in}{.1in}}
\newcommand{\diag}{{\rm diag}}
\newcommand{\BE}{\begin{equation}}
\newcommand{\EE}{\end{equation}}
\newcommand{\zd}{\,\mathrm{d}}
\numberwithin{equation}{section}
\newtheorem{proposition}{Proposition}[section]
\newtheorem{theorem}[proposition]{Theorem}
\newtheorem{lemma}[proposition]{Lemma}
\newtheorem{remark}[proposition]{Remark}
\newtheorem{example}[proposition]{Example}
\begin{document}
\title{\bf  Second-order and nonuniform time-stepping schemes for time fractional evolution equations with time-space dependent coefficients}

 \author{Pin Lyu\thanks{Email: plyu@swufe.edu.cn. School of Economic Mathematics, Southwestern University of Finance and Economics, Chengdu, China. This author is supported by the Fundamental Research Funds for the Central Universities (JBK2001002), and the National Natural Science Foundation of China (12071373).}
 \and Seakweng Vong\thanks{Corresponding author.
 Email: swvong@um.edu.mo. Department of Mathematics, University of Macau, Macao, China. This author is
 funded by The Science and Technology Development Fund, Macau SAR (File no. 0005/2019/A)  and
University of Macau (File no. MYRG2018-00047-FST)}
}
\date{}
 \maketitle\normalsize

\begin{abstract}
The numerical analysis of time fractional evolution equations with the second-order elliptic operator including general time-space dependent variable coefficients is challenging, especially when the classical weak initial singularities are taken into account. In this paper, we introduce a concise technique to construct efficient time-stepping schemes with variable time step sizes for two-dimensional time fractional sub-diffusion and diffusion-wave equations with general time-space dependent variable coefficients. By means of the novel technique, the nonuniform Alikhanov type schemes are constructed and analyzed for the sub-diffusion and diffusion-wave problems. For the diffusion-wave problem, our scheme is constructed by employing the recently established symmetric fractional-order reduction (SFOR) method. The unconditional stability of proposed schemes is rigorously  discussed under mild assumptions on variable coefficients and, based on reasonable regularity assumptions and weak time mesh restrictions, the second-order convergence is obtained with respect to discrete $H^1$-norm. Numerical experiments are given to demonstrate the theoretical statements.
\end{abstract}
 {{\bf Key words:}  time fractional evolution equations; variable coefficients; weak singularity; nonuniform mesh}
 \\
 \medskip
 {\bf AMS subject classifications:} 65M06; 65M12; 35B65; 35R11

\section{Introduction}

Fractional differential equations (FDEs) are extremely powerful mathematical tools for the modeling of diverse processes and phenomena which contain memory and hereditary properties, interested readers may refer to  \cite{IP1999,Mainardi2001,FaKS2005,Metzler2000} and references therein for some practical applications of FDEs in physics, biology and chemistry, etc. Finding efficient and accurate numerical solutions of FDEs becomes an increasingly  hot research topic as it is hard to obtain the reliable analytic solution in general.

In this work, we consider numerical analysis of the two-dimensional time fractional evolution equations with general time-space dependent variable coefficients:
\begin{align}\label{eq1}
&{\cal D}_t^\alpha u={\cal A}u +f({\bf x},t), \quad {\bf x}\in\Omega,~t\in(0,T];\\\label{eq2}
&u({\bf x},0)=\varphi({\bf x}), \quad {\bf x}\in\Omega,\quad \mbox{if} \quad \alpha\in(0,1); \\\label{eq3}
&u({\bf x},0)=\phi({\bf x}),~ u_t({\bf x},0)=\psi({\bf x}), \quad {\bf x}\in\Omega, \quad \mbox{if} \quad \alpha\in(1,2);
\end{align}
subject to the homogeneous boundary condition $u({\bf x},t)=0$ for $({\bf x},t)\in\partial\Omega\times(0,T]$, where $\Omega=(x_l,x_r)\times(y_l,y_r)$, ${\bf x}=(x,y)$ and ${\cal A}$ is a linear second-order elliptic operator which is dependent on time and space:
\begin{align}\label{ellipticA}
{\cal A}u:=\left\{a_1({\bf x},t)\partial_{xx}^2+a_2({\bf x},t)\partial_{yy}^2+b_1({\bf x},t)\partial_{x}+b_2({\bf x},t)\partial_{y}+b_3({\bf x},t)\right \} u.
\end{align}
The fractional derivative ${\cal D}_t^\alpha$ in \eqref{eq1} is defined by the Caputo sense:
$${\cal D}_t^{\alpha}u(t):=\int_0^t \omega_{n-\alpha}(t-s)u^{(n)}(s)\zd s  \quad \mbox{with}\quad \omega_{n-\alpha}(t)=\frac{t^{n-1-\alpha}}{\Gamma(n-\alpha)} ,  \quad n=\lceil \alpha \rceil,\quad t>0. $$

With error analyses basing on sufficient smoothness in time of the analytical solutions, various numerical methods  are designed for fractional sub-diffusion or diffusion-wave equations with variable coefficients which are time-space dependent (e.g. \cite{Yang2018}) or only space dependent (e.g.\cite{Cui-JCP2013,VLWjsc2016,ZhangAA2020,ZhaoXu2014,WangYM2016,WeiZhao2020}).
 However, it is well-known that the solution of time fractional initial value problems typically exhibits weak initial singularities.
  Thus most of the traditional time-stepping methods fail to preserve the desired convergence rates in this general and practical situation. In \cite{KoptevaMC2019}, Kopteva discussed the L1-type discretizations on graded time meshes for fractional parabolic equation with classical weak singular solutions, where the second-order elliptic operator  ${\cal L}u=\sum_{k=1}^d\{-\partial_{x_k}a_k(x)\partial_{x_k}u+b_k(x)\partial_{x_k}u\}+c(x)u$  $(d=1,2,3)$ is only space dependent. A second-order convergent method was studied lately in Wei et al.\cite{WeiAML2021},  where the Alikhanov formula \cite{AAA} on  the graded time meshes is considered to deal with the weak initial singularity of the two-dimensional time fractional diffusion equations with the elliptic operator ${\cal L}u=\mbox{div} (a({\bf x})\nabla u)$ which is symmetric and space dependent only.
Recently, the sub-diffusion problems with time-space dependent coefficients and nonsmooth data were  studied in several research works.
In \cite{Mustapha-MC2018}, Mustapha studied a semidiscrete Galerkin finite element method for the time fractional diffusion equations with time-space dependent diffusivity coefficient:
\begin{align}\label{ts-dependent-1}
{\cal D}_t^\alpha u({\bf x},t)=\mbox{div} (a({\bf x},t)\nabla u({\bf x},t))+f({\bf x},t) \quad \mbox{in}~\Omega\times (0,T], ~\alpha\in(0,1),
\end{align}
where $\Omega\subset {\mathbb R}^d~(d\geq 1)$, and  the optimal error bounds in $L^2$- and $H^1$-norms are obtained for both smooth and nonsmooth initial data. Jin, Li and Zhou \cite{Jin-MC2019} then proposed an efficient numerical scheme with the Galerkin finite element method in space and backward Euler convolution quadrature in time for the problem \eqref{ts-dependent-1}. The optimal convergence with first-order temporal accuracy is obtained provided a certain regularity of the solutions is proved for both nonsmooth initial data and incompatible source term. The second-order temporal convergence was further achieved for the convolution quadrature generated by second-order backward differentiation formula with proper correction at the first time step \cite{Jin-NM2020}, where an improved regularity was shown.
We remark that, based on some mild and natural assumptions on $a({\bf x},t)$, the time-space dependent elliptic operator in \eqref{ts-dependent-1} is symmetric and is a particular case (for $d=2$) of ${\cal A}$ in \eqref{ellipticA} (or $L$ in \cite{Yang2018}) because $\mbox{div} (a({\bf x},t)\nabla u({\bf x},t))=a({\bf x},t)\Delta u({\bf x},t)+ \nabla a({\bf x},t) \cdot\nabla u({\bf x},t)$.
There have been many works on the theoretical and numerical study of classical parabolic and hyperbolic equations with general time-space dependent elliptic operator, e.g. \cite{Alinhac,LeeSIAM2013,Luskin1982}.
To the best of our knowledge, taking the weak initial singularity into account, there is no study on the efficient numerical methods for time fractional evolution equations (sub-diffusion and diffusion-wave) where the elliptic operators include general time-space dependent coefficients, i.e., the elliptic operators take the form \eqref{ellipticA}.

In the past few years, numerical methods on nonuniform time meshes are found to be very efficient and thus are of great interests in resolving the weak initial singularities of the time fractional initial value problems \cite{JiLiaoSIAMjsc2020,KoptevaMC2019,LiaoL1,LiaoSecondOrder,Liao-AC2,LiaoYanZhang2018,LyuVong2020diffu-wave,Stynes-SIAM2017,ChenStynesJSC2019,WeiAML2021}. As the operator ${\cal A}$ in \eqref{ellipticA} is  in general non-symmetric and is substantially different from the one in \cite{LiWangXie2020}, this brings challenges in the analysis of the standard nonuniform approximations of \eqref{eq1}--\eqref{eq3}. To tackle the problem, we will introduce a novel and concise technique to study highly accurate numerical methods  for the time fractional evolution equations with general time-space dependent coefficients on nonuniform time meshes. By the proposed technique,  an important estimate, i.e. the inequality \eqref{zQDz2},
can be guaranteed in the analysis of corresponding nonuniform algorithms.
 Our numerical schemes will utilize the Alikhanov formula on possible nonuniform time meshes to approximate the Caputo derivatives.
We recall that, for a given positive integer $N$, the Alikhanov formulas  for the Caputo derivative ${\cal D}_t^{\beta} g(t_{n-\theta})~(0<\beta<1)$
on arbitrary time meshes $0=t_0<t_1<\cdots<t_N=T$ is expressed by the following summation of convolution structure \cite{LiaoSecondOrder}:
 \begin{align}\label{dis-Caputo}
 ({\cal D}_\tau^{\beta} g)^{n-\theta}:=\sum_{k=1}^nA_{n-k}^{(n)}\nabla_{\tau} g^k, \quad \mbox{where}~g^k=g(t_k)~\mbox{and}~\nabla_{\tau} g^k=g^k-g^{k-1},
 \end{align}
 where 
 $\theta:=\beta/2$. For simplicity of presentation, the precise formulation of the coefficients $A_{n-k}^{(n)}$ and its corresponding  properties are given in Appendix (Subsection \ref{L1Alikhanov}).  To study the diffusion-wave problem, we further employ a symmetric fractional-order reduction (SFOR) method which was investigated in our very recent work \cite{LyuVong2020diffu-wave}.
\begin{remark}
In the rest of this paper, we always take the setting:
\begin{equation}\nonumber
\beta=\left\{\begin{array}{ll}\smallskip
\alpha,\quad & \mbox{if}~\alpha\in(0,1),~\mbox{i.e.},~\mbox{while concerning the sub-diffusion problem};\\
\alpha/2,~ & \mbox{if}~\alpha\in(1,2),~\mbox{i.e.},~\mbox{while concerning the diffusion-wave problem}.
\end{array}\right.
\end{equation}
\end{remark}
In the construction and analysis of our proposed numerical methods, the variable coefficients involved in ${\cal A}$ are assumed to satisfy two generic conditions: For ${\bf x}\in\Omega,~t\in[0,T]$,
\begin{itemize}
\item[{\bf V1}.] $a_k({\bf x},t)>0$, and $a_k(\cdot,t)\in {\cal C}^1([0,T])$ with $ \left|(a_1)_t/a_1\right|+\left|(a_2)_t/a_2\right|\leq C_p$, for $k=1,2$;
\item[{\bf V2}.] $a_k({\bf x},\cdot)\in {\cal C}^3(\Omega)$ for $k=1,2$, and $\left|b_l({\bf x},t)\right|\leq {C}_l$ for $l=1,2,3$,
 \end{itemize}
 where $C_p$ and $C_l$ are positive constants. We will obtain the second-order $H^1$-norm convergence  (in time and space) of the proposed nonuniform schemes for both the sub-diffusion and diffusion-wave problems under the following assumptions on regularity ($C_u$ is a positive constant):
 For $t\in(0,T]$ and $k=1,2,3$,
  \begin{align}\label{regularity1}
  &  \| u\|_{H^4(\Omega)}\leq C_u,  \quad\mbox{for} \quad  \alpha\in (0,1)\cup(1,2);\\\label{regularity2}
 &\|\partial_t^{(k)}u\|_{H^3(\Omega)}\leq C_u(1+t^{\sigma_1-k}),  \quad \mbox{if}~\alpha\in (0,1); \\\label{regularity3}
 &\|\partial_t^{(k)}{ u}\|_{H^3(\Omega)}\leq C_u(1+t^{\sigma_2-k}),\quad
  \|\partial_t^{(k)}{v}\|_{H^3(\Omega)}\leq C_u(1+t^{\sigma_3-k}) ,\quad \mbox{if}~\alpha\in (1,2),
 \end{align}
 where $v:={\cal D}_t^\beta {\tilde u}$ with  ${\tilde u}:=u-t\psi$, $\sigma_1\in(0,1)\cup(1,2)$, $\sigma_2\in(1,2)\cup(2,3)$ and $\sigma_3\in(1/2,1)\cup(1,2)$;
Furthermore, we impose the weak mesh assumption:
 \begin{itemize}
 \item[{\bf MA.}] There is a constant $C_\gamma>0$ such that $\tau_k\leq C_\gamma\tau\min\{1,t_k^{1-1/\gamma}\}$ for $1\leq k\leq N$, with $t_k\leq C_\gamma t_{k-1}$ and $\tau_k/t_k\leq C_\gamma\tau_{k-1}/t_{k-1}$ for $2\leq k\leq N$,
 \end{itemize}
  where $\gamma\geq1$ is the mesh parameter,   $\tau_k:=t_k-t_{k-1}$ denotes the $k$-th time step size and $\tau:=\max_{1\leq k\leq N}\{\tau_k\}$.

The rest of the paper is organized as follows. In Section \ref{new-technique}, we will introduce a  concise technique which is used to construct and analyze nonuniform schemes for the governing problems. In Section \ref{sub-diffusion}, the numerical scheme which based on the nonuniform Alikhanov formula is proposed for sub-diffusion equation with general variable coefficients, and its stability and second-order convergence are rigorously  discussed with respect to discrete $H^1$-norm. In Section \ref{diffusion-wave}, by applying the SFOR method, the nonuniform Alikhanov type scheme is constructed for the diffusion-wave equation with general variable coefficients. We also show that the scheme is  stable and second-order convergent  in the discrete $H^1$-norm. Numerical examples are given in Section \ref{Numerical} to demonstrate the theoretical statements.
As an appendix, in  Section \ref{Appendix}, the precise definitions of the coefficients of Alikhanov formula,  the proof of inequality \eqref{zQDz2} and the analysis of truncation errors are given.

\section{A  technique for numerical analysis}\label{new-technique}

In this section, we will present a concise technique   to study numerical schemes with variable time step sizes for time fractional evolution equations with general time-space dependent variable coefficients.

Firstly, we show an important lemma which extends the one in \cite[Lemma 4.1]{LiaoGronwall}.
\begin{lemma}\label{zQDz}
For a continuous  (w.r.t. $x$ and $t$)  function $q(x,t)>0$, $x\in(x_l,x_r)\subset{\mathbb R}$, $t\in[0,T]$, we define a diagonal matrix
$${\bf Q}^{(k)}:=\diag\left(q(x_1,t_k),q(x_2,t_k), \cdots,q(x_{m},t_k)\right),\quad m\geq 1, ~k\geq 0,$$
where $t_k\in [0,T]$ with $t_j<t_{j+1}$, and $x_i\in (x_l,x_r)$. Let ${\bf z}^k:=(z_1^k,z_2^k,\cdots,z_m^k)^T$ be a real vector, and ${\bf z}^{n-\theta}:=(1-\theta){\bf z}^{n}+\theta {\bf z}^{n-1}$.
Then
\begin{align}\label{zQDz1}
({\bf z}^{n-\theta})^T{\bf Q}^{(n)}({\cal D}_\tau^\beta {\bf z})^{n-\theta}\geq \frac12\sum_{k=1}^nA_{n-k}^{(n)}\nabla_\tau [({\bf z}^k)^T{\bf Q}^{(n)}{\bf z}^k].
\end{align}
Moreover, if $q(x,t)$ is {\bf non-increasing w.r.t. $t$ for every fixed $x$}, it holds that
\begin{align}\label{zQDz2}
({\bf z}^{n-\theta})^T{\bf Q}^{(n)}({\cal D}_\tau^\beta {\bf z})^{n-\theta}\geq \frac12\sum_{k=1}^nA_{n-k}^{(n)}\nabla_\tau [({\bf z}^k)^T{\bf Q}^{(k)}{\bf z}^k].
\end{align}
\end{lemma}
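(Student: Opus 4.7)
The plan is to reduce both inequalities to a scalar discrete fractional derivative estimate and then handle the time-varying weight in \eqref{zQDz2} via an Abel summation together with monotonicity of the Alikhanov coefficients. The scalar tool is the inequality
\begin{equation*}
z^{n-\theta}({\cal D}_\tau^\beta z)^{n-\theta}\ge \tfrac12\sum_{k=1}^n A_{n-k}^{(n)}\nabla_\tau(z^k)^2,
\end{equation*}
which is essentially \cite[Lemma 4.1]{LiaoGronwall}. For \eqref{zQDz1}, since ${\bf Q}^{(n)}$ is diagonal with positive entries $q_i:=q(x_i,t_n)$ that do not depend on the summation index $k$, the bilinear form splits as $\sum_{i=1}^m q_i\,z_i^{n-\theta}({\cal D}_\tau^\beta z_i)^{n-\theta}$. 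Applying the scalar bound to each coordinate $z_i$, multiplying by $q_i>0$, summing over $i$, and pulling $q_i$ (constant in $k$) back inside $\nabla_\tau$ restores the quadratic form $({\bf z}^k)^T{\bf Q}^{(n)}{\bf z}^k$ and yields \eqref{zQDz1}.

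For \eqref{zQDz2}, I would start from \eqref{zQDz1} and prove the auxiliary inequality
\begin{equation*}
\sum_{k=1}^n A_{n-k}^{(n)}\nabla_\tau h^k\ge 0,\qquad h^k:=({\bf z}^k)^T\bigl({\bf Q}^{(n)}-{\bf Q}^{(k)}\bigr){\bf z}^k,
\end{equation*}
which, added to \eqref{zQDz1}, gives the desired bound. The non-increasing-in-$t$ property of $q$ forces ${\bf Q}^{(n)}\preceq{\bf Q}^{(k)}$ entrywise for $0\le k\le n$, so $h^k\le 0$ and $h^n=0$. An Abel summation gives
\begin{equation*}
\sum_{k=1}^n A_{n-k}^{(n)}(h^k-h^{k-1})=-A_{n-1}^{(n)}h^0+\sum_{k=1}^{n-1}\bigl(A_{n-k}^{(n)}-A_{n-k-1}^{(n)}\bigr)h^k.
\end{equation*}
The first term is $\ge 0$ since $A_{n-1}^{(n)}>0$ and $h^0\le 0$; each summand in the remainder is nonnegative provided the coefficients satisfy $A_0^{(n)}\ge A_1^{(n)}\ge\cdots\ge A_{n-1}^{(n)}>0$, because then $A_{n-k}^{(n)}-A_{n-k-1}^{(n)}\le 0$ multiplies the non-positive $h^k$.

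The main obstacle, and presumably the reason \eqref{zQDz2} is deferred to the appendix, is verifying precisely this monotonicity of the nonuniform Alikhanov coefficients $A_j^{(n)}$ in $j$ under the mesh assumption {\bf MA}; the coordinatewise reduction for \eqref{zQDz1} and the Abel manipulation for \eqref{zQDz2} are otherwise straightforward bookkeeping once the kernel properties of Appendix \ref{L1Alikhanov} are in hand.
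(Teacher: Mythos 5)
Your proof is correct. For \eqref{zQDz1} you take a more economical route than the paper: because ${\bf Q}^{(n)}$ is diagonal with positive entries that do not depend on the summation index $k$, the bilinear form decouples into $m$ scalar problems, each handled by the scalar inequality of \cite[Lemma 4.1]{LiaoGronwall}; the paper instead reruns the whole matrix version of that argument in its appendix (introducing ${\bf w}^j=\sum_{k\le j}A_{n-k}^{(n)}({\bf z}^k-{\bf z}^{k-1})$ and $B_j=1/A_{n-j}^{(n)}$, deriving two one-sided bounds at ${\bf z}^{n}$ and ${\bf z}^{n-1}$, and combining them with weights $1-\theta$ and $\theta$ using $\tfrac{1-\theta}{A_0^{(n)}}-\tfrac{\theta}{A_0^{(n)}-A_1^{(n)}}\ge 0$). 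Your coordinatewise reduction buys brevity at no cost, since the lemma only ever concerns diagonal ${\bf Q}$. For \eqref{zQDz2} your argument is essentially identical to the paper's: they expand $\sum_{k}A_{n-k}^{(n)}\nabla_\tau[({\bf z}^k)^T{\bf Q}^{(n)}{\bf z}^k]$ by Abel summation and replace $({\bf z}^k)^T{\bf Q}^{(n)}{\bf z}^k$ by the larger $({\bf z}^k)^T{\bf Q}^{(k)}{\bf z}^k$ exactly in the terms carrying non-positive coefficients, which is your claim $\sum_k A_{n-k}^{(n)}\nabla_\tau h^k\ge 0$ in disguise. One correction of emphasis: the monotonicity $A_0^{(n)}\ge A_1^{(n)}\ge\cdots\ge A_{n-1}^{(n)}>0$ is not an open obstacle tied to \textbf{MA} --- it is property \textbf{A1} of Subsection \ref{L1Alikhanov}, already established in the cited references under the maximum step-ratio bound $\rho\le 7/4$ (the same restriction your scalar lemma implicitly requires), and the paper simply invokes it; also it is \eqref{zQDz1}, not \eqref{zQDz2}, whose derivation is deferred to the appendix.
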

\begin{proof}
The inequality \eqref{zQDz1} can be verified according to \cite[Lemma 4.1]{LiaoGronwall}, we move its derivation to the Appendix (Subsection \ref{zQDz0}).

If $q(x,t)$ is non-increasing w.r.t. $t$ for every fixed $x$, we have $({\bf z}^k)^T{\bf Q}^{(n)}{\bf z}^k\leq ({\bf z}^k)^T{\bf Q}^{(k)}{\bf z}^k$ while $k\leq n$. Then
\begin{align*}
&({\bf z}^{n-\theta})^T{\bf Q}^{(n)}({\cal D}_\tau^\beta {\bf z})^{n-\theta}\\
\geq& \frac12\sum_{k=1}^nA_{n-k}^{(n)}\nabla_\tau [({\bf z}^k)^T{\bf Q}^{(n)}{\bf z}^k]\\
=&\frac12\left[A_{0}^{(n)}({\bf z}^n)^T{\bf Q}^{(n)}{\bf z}^n-\sum_{k=1}^{n-1}(A_{n-k-1}^{(n)}-A_{n-k}^{(n)}) ({\bf z}^k)^T{\bf Q}^{(n)}{\bf z}^k- A_{n-1}^{(n)}({\bf z}^0)^T{\bf Q}^{(n)}{\bf z}^0\right]\\
\geq&\frac12\left[A_{0}^{(n)}({\bf z}^n)^T{\bf Q}^{(n)}{\bf z}^n-\sum_{k=1}^{n-1}(A_{n-k-1}^{(n)}-A_{n-k}^{(n)}) ({\bf z}^k)^T{\bf Q}^{(k)}{\bf z}^k- A_{n-1}^{(n)}({\bf z}^0)^T{\bf Q}^{(0)}{\bf z}^0\right]\\
=&\frac12\sum_{k=1}^nA_{n-k}^{(n)}\nabla_\tau [({\bf z}^k)^T{\bf Q}^{(k)}{\bf z}^k].
\end{align*}
\end{proof}
Next, we will utilize Lemma \ref{zQDz} to obtain some  properties for our numerical analysis.
For a continuous (w.r.t ${\bf x}$ and $t$) function $p({\bf x},t)>0$, suppose $p_1=pa_1$ and $p_2=pa_2$, we have
\begin{align*}
{\cal A}u=p^{-1}(p{\cal A}u)&=p^{-1}\left\{p_1\partial_{xx}^2+p_2\partial_{yy}^2+pb_1\partial_x+pb_2\partial_y+pb_3\right\}u\\
&=p^{-1}\left\{\partial_x (p_1\partial_x u)+\partial_y (p_2\partial_y u)+[pb_1-(p_1)_x]\partial_x u +[pb_2-(p_2)_y]\partial_y u+pb_3 u\right\}\\
&=p^{-1}\left[\partial_x (p_1\partial_x u)+\partial_y (p_2\partial_y u)\right]+[b_1-p^{-1}(p_1)_x]\partial_x u +[b_2-p^{-1}(p_2)_y]\partial_y u+b_3 u.
\end{align*}
Some spatial notations are required.  For two positive integers $M_x$ and $M_y$, denote $h_x:=(x_r-x_l)/M_x$ and $h_y:=(y_r-y_l)/M_y$. Define the mesh space $\Omega_h:=\{{\bf x}_h=(x_l+ih_x,y_l+jh_y)|1\leq i\leq M_x-1,1\leq j\leq M_y-1\}$ and $\bar\Omega_h:=\Omega_h\cup \partial \Omega$. For any grid functions $u_h:=\{u_{i,j}=u(x_i,y_j)|(x_i,y_j)\in\bar\Omega_h \}$, the central difference operators are given by
         $$\delta_x u_{i+\frac12,j}:=(u_{i+1,j}-u_{i,j})/{h_x}, ~0\leq i\leq M_x-1;\quad \delta_{\hat x} u_{i,j}:=(u_{i+1,j}-u_{i-1,j})/(2h_x),~1\leq i\leq M_x-1;$$
         and $\delta_y u_{i,j+\frac12}$, $\delta_{\hat y} u_{i,j}$ are defined similarly.


Denote $p_3:=b_1-p^{-1}(p_1)_x$, $p_4:=b_2-p^{-1}(p_2)_y$, the discrete function
$p_h^{n-\theta}:=p({\bf x}_h,t_{n-\theta})~(0\leq n\leq N)$ with $p_h^{-\theta}:=p({\bf x}_h,t_0)$, and we use similar notations  for $(p_k)_h^{n-\theta} ~(k=1,2,3,4)$ and $(b_3)_h^{n-\theta}$. Then we define a discrete operator corresponding to ${\cal A}$:
$${\cal A}_h^{n-\theta}:=(p_h^{n-\theta})^{-1}\left\{\delta_x [(p_1)_h^{n-\theta} \delta_x] +\delta_y [(p_2)_h^{n-\theta} \delta_y]  \right\}+ (p_3)_h^{n-\theta} \delta_{\hat x}+ (p_4)_h^{n-\theta} \delta_{\hat y}+(b_3)_h^{n-\theta}.$$
Since the numerical schemes and corresponding analysis in the next two sections will be done in matrix form, we define the following matrices (the symbol `$\otimes$' denotes the Kronecker product)
\begin{align*}
&{\bf P}^{n-\theta}:=\diag(p_{1,1}^{n-\theta},\cdots,p_{M_x-1,1}^{n-\theta},p_{1,2}^{n-\theta},\cdots,p_{M_x-1,2}^{n-\theta},\cdots\cdots,p_{1,M_y-1}^{n-\theta},\cdots,p_{M_x-1,M_y-1}^{n-\theta}),\\
&{\bf P}_1^{n-\theta}:=\diag((p_1)_{1/2,1}^{n-\theta},\cdots,(p_1)_{M_x-1/2,1}^{n-\theta},(p_1)_{1/2,2}^{n-\theta},\cdots,(p_1)_{M_x-1/2,2}^{n-\theta},\\
&\qquad\qquad\qquad\cdots\cdots,(p_1)_{1/2,M_y-1}^{n-\theta},\cdots,(p_1)_{M_x-1/2,M_y-1}^{n-\theta}),\\
&{\bf P}_2^{n-\theta}:=\diag((p_2)_{1,1/2}^{n-\theta},\cdots,(p_2)_{M_x-1,1/2}^{n-\theta},(p_2)_{1,3/2}^{n-\theta},\cdots,(p_2)_{M_x-1,3/2}^{n-\theta},\\
&\qquad\qquad\qquad\cdots\cdots,(p_2)_{1,M_y-1/2}^{n-\theta},\cdots,(p_2)_{M_x-1,M_y-1/2}^{n-\theta}),\\
&{\bf A}^{n-\theta}:=(I_y\otimes S_x)^T{\bf P}_1^{n-\theta}(I_y\otimes S_x)+(S_y\otimes I_x)^T{\bf P}_2^{n-\theta}(S_y\otimes I_x),\\
&{\bf B}^{n-\theta}:={\bf P}_3^{n-\theta}[I_y\otimes({\hat S}_x-{\hat S}_x^T)]+{\bf P}_4^{n-\theta}[({\hat S}_y-{\hat S}_y^T)\otimes I_x],\\
&{\bf C}^{n-\theta}:=\diag((b_3)_{1,1}^{n-\theta},\cdots,(b_3)_{M_x-1,1}^{n-\theta},(b_3)_{1,2}^{n-\theta},\cdots,(b_3)_{M_x-1,2}^{n-\theta},\\
&\qquad\qquad\qquad\cdots\cdots,(b_3)_{1,M_y-1}^{n-\theta},\cdots,(b_3)_{M_x-1,M_y-1}^{n-\theta}) ,\\
& {\bf u}^n:=(u_{1,1}^n,\cdots,u_{M_x-1,1}^n,u_{1,2}^n,\cdots,u_{M_x-1,2}^n,\cdots\cdots,u_{1,M_y-1}^n,\cdots,u_{M_x-1,M_y-1}^n)^T,
\end{align*}
where ${\bf P}_3^{n-\theta}$ and ${\bf P}_4^{n-\theta}$, with entries  coming from $(p_3)_{i,j}^{n-\theta}$ and $(p_4)_{i,j}^{n-\theta}$ respectively, are all $(M_x-1)(M_y-1)\times (M_x-1)(M_y-1)$ diagonal matrices defined similarly to ${\bf P}^{n-\theta}$, while $I_x$ and $I_y$ are $(M_x-1)$ and $(M_y-1)$ dimensional identity matrices respectively.
Furthermore, we have used the notations
{\small\begin{eqnarray}\nonumber
S_x:=\frac1{h_x}
\left[
\begin{array}{cccc}
-1  & & & \\
1& -1&  & \\
&   \ddots& \ddots& \\
&&  1& -1\\
&&& 1\\
\end{array}
\right]_{M_x\times(M_x-1)},~
{\hat S}_x:=\frac1{2h_x}
\left[
\begin{array}{ccccc}
-1  &1 & & &\\
& -1&  1& &\\
& &  \ddots& \ddots& \\
&& & -1& 1\\
&&&& -1\\
\end{array}
\right]_{(M_x-1)\times(M_x-1)},
\end{eqnarray}}
and $S_y$, ${\hat S}_y$ are defined in a similar way.

Therefore, if $p$ is non-increasing w.r.t. $t$ for every fixed ${\bf x}$, according to Lemma \ref{zQDz}, we have
\begin{align}\label{uPDu}
2({\bf u}^{n-\theta})^T{\bf P}^{n-\theta}({\cal D}_\tau^\beta {\bf u})^{n-\theta}\geq \sum_{k=1}^nA_{n-k}^{(n)}\nabla_\tau [({\bf u}^k)^T{\bf P}^{k-\theta}{\bf u}^k].
\end{align}

Moreover, taking
\begin{align}\label{ww}
{\bf u}_x^{n-\vartheta}:=(I_y\otimes S_x){\bf u}^{n-\vartheta},\quad {\bf u}_y^{n-\vartheta}:=(S_y\otimes I_x){\bf u}^{n-\vartheta},\quad \mbox{where}\quad \vartheta=\theta~\mbox{or}~0,
\end{align}
if $p_1$ and $p_2$ are all non-increasing w.r.t. $t$ for every fixed ${\bf x}$, we have
\begin{align}\nonumber
&2({\bf u}^{n-\theta})^T{\bf A}^{n-\theta}({\cal D}_{\tau}^\beta {\bf u})^{n-\theta}\\\nonumber
\geq&A_0^{(n)}({\bf u}^n)^T{\bf A}^{n-\theta}{\bf u}^n
-\sum_{k=1}^{n-1}(A_{n-k-1}^{(n)}-A_{n-k}^{(n)})({\bf u}^k)^T{\bf A}^{n-\theta}{\bf u}^k-A_{n-1}^{(n)}({\bf u}^0)^T{\bf A}^{n-\theta}{\bf u}^0\\\nonumber
=&A_0^{(n)}\left[({\bf u}_x^n)^T{\bf P}_1^{n-\theta}{\bf u}_x^n + ({\bf u}_y^n)^T{\bf P}_2^{n-\theta}{\bf u}_y^n\right]-\sum_{k=1}^{n-1}(A_{n-k-1}^{(n)}-A_{n-k}^{(n)})\left[({\bf u}_x^k)^T{\bf P}_1^{n-\theta}{\bf u}_x^k + ({\bf u}_y^k)^T{\bf P}_2^{n-\theta}{\bf u}_y^k\right]\\\nonumber
&-A_{n-1}^{(n)}\left[({\bf u}_x^0)^T{\bf P}_1^{n-\theta}{\bf u}_x^0+ ({\bf u}_y^0)^T{\bf P}_2^{n-\theta}{\bf u}_y^0\right]\\\nonumber
\geq & A_0^{(n)}\left[({\bf u}_x^n)^T{\bf P}_1^{n-\theta}{\bf u}_x^n + ({\bf u}_y^n)^T{\bf P}_2^{n-\theta}{\bf u}_y^n\right]-\sum_{k=1}^{n-1}(A_{n-k-1}^{(n)}-A_{n-k}^{(n)})\left[({\bf u}_x^k)^T{\bf P}_1^{k-\theta}{\bf u}_x^k + ({\bf u}_y^k)^T{\bf P}_2^{k-\theta}{\bf u}_y^k\right]\\\label{uADu}
&-A_{n-1}^{(n)}\left[({\bf u}_x^0)^T{\bf P}_1^{-\theta}{\bf u}_x^0+ ({\bf u}_y^0)^T{\bf P}_2^{-\theta}{\bf u}_y^0\right].
\end{align}

The two inequalities \eqref{uPDu} and \eqref{uADu} play critical roles in the analysis of  our methods. Therefore, we must fulfill the following:
\begin{itemize}
\item find a continuous and positive function $p({\bf x},t)$ which is non-increasing w.r.t. $t$ for every fixed ${\bf x}\in\Omega$ such that $p_1({\bf x},t)=pa_1$ and $p_2({\bf x},t)=pa_2$ are all non-increasing w.r.t. $t\in[0,T]$ for every fixed ${\bf x}\in\Omega$.
\end{itemize}
The above task can be completed by choosing the candidates presented in the following lemma.
\begin{lemma}\label{functionP}
For the positive variable coefficients $a_1$ and $a_2$, consider
\begin{align}\label{pp1p2}
p({\bf x},t):=\frac{d({\bf x})e^{-C_pt}}{a_1({\bf x},t)a_2({\bf x},t)},\quad p_1({\bf x},t):=\frac{d({\bf x})e^{-C_pt}}{a_2({\bf x},t)},\quad p_2({\bf x},t):=\frac{d({\bf x})e^{-C_pt}}{a_1({\bf x},t)},
\end{align}
for $ x\in\Omega,~t\in[0,T]$; where $C_p$ is the constant in {\bf V1} and $d({\bf x})$ is a positive and continuous function.
If  $a_1$ and $a_2$  satisfy  {\bf V1}, then the functions $p$, $p_1$ and $p_2$ are positive and continuous.
 Furthermore, they are all non-increasing w.r.t. $t$ for every fixed ${\bf x}\in\Omega$.
\end{lemma}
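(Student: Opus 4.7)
The plan is to observe that positivity and continuity are essentially free from the hypotheses, so the only substantive claim is the monotonicity of $p$, $p_1$, $p_2$ in $t$ at each fixed $\mathbf{x}$. This I would verify by computing the logarithmic derivative of each function and checking non-positivity using the bound $|(a_1)_t/a_1|+|(a_2)_t/a_2|\le C_p$ from assumption \textbf{V1}.

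First I would dispatch positivity and continuity. By \textbf{V1}, $a_1,a_2$ are strictly positive and $\mathcal{C}^1$ in $t$ (in particular continuous in $(\mathbf{x},t)$), while $d$ is positive and continuous and $e^{-C_p t}>0$. Hence each of the three functions in \eqref{pp1p2}, being a product/quotient of positive continuous functions with nonvanishing denominator, is itself positive and continuous on $\Omega\times[0,T]$.

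Next, for monotonicity I would work with $\partial_t\log(\cdot)$ since each function factors as $d(\mathbf{x})e^{-C_pt}$ times a function of $(\mathbf{x},t)$ involving only $a_1,a_2$. A direct computation gives
\begin{equation*}
\frac{\partial_t p_1}{p_1}=-C_p-\frac{(a_2)_t}{a_2},\qquad \frac{\partial_t p_2}{p_2}=-C_p-\frac{(a_1)_t}{a_1},\qquad \frac{\partial_t p}{p}=-C_p-\frac{(a_1)_t}{a_1}-\frac{(a_2)_t}{a_2}.
\end{equation*}
For $p$, the bound in \textbf{V1} says exactly that $|(a_1)_t/a_1+(a_2)_t/a_2|\le C_p$, so the right-hand side is $\le 0$ and thus $\partial_t p\le 0$. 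For $p_1$ I would note that $|(a_2)_t/a_2|\le |(a_1)_t/a_1|+|(a_2)_t/a_2|\le C_p$, whence $\partial_t p_1\le 0$; the case of $p_2$ is identical.

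There is no real obstacle here: the lemma is essentially a verification, and the cleverness lies in how the weights are engineered. The exponential factor $e^{-C_p t}$ is built in precisely to dominate the worst-case growth of $1/a_k({\bf x},t)$ in time permitted by \textbf{V1}, while the spatial weight $d({\bf x})$ remains free for later use in the stability analysis. This is why the bound in \textbf{V1} is stated as a sum of absolute values of logarithmic derivatives — it is exactly the quantity that appears when differentiating $\log p$.
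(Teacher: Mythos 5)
Your proposal is correct and follows essentially the same route as the paper: both verify positivity/continuity directly from \textbf{V1} and then differentiate in $t$, checking that the resulting numerator (equivalently, your logarithmic derivative) is non-positive via the bound $\left|(a_1)_t/a_1\right|+\left|(a_2)_t/a_2\right|\leq C_p$. Your version merely makes explicit the triangle-inequality step that the paper leaves as "easy to reach."
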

\begin{proof}
It is obvious that $p$, $p_1$ and $p_2$ are all positive and continues.

By taking the partial derivative w.r.t. $t$, we have
\begin{align*}
&p_t=d({\bf x})e^{-C_pt}\left[\frac{-C_pa_1a_2-(a_1a_2)_t}{(a_1a_2)^2}\right],\\
&(p_1)_t=d({\bf x})e^{-C_pt}\left[\frac{-C_pa_2-(a_2)_t}{(a_2)^2}\right],\\
&(p_2)_t=d({\bf x})e^{-C_pt}\left[\frac{-C_pa_1-(a_1)_t}{(a_1)^2}\right].
\end{align*}
Then it is easy to reach the desired result provided the assumptions in {\bf V1} hold.
\end{proof}
\begin{remark}
Since the numerical methods proposed later depend on precise choices of $p$, $p_1$ and $p_2$, here we list some simple candidates for  $d({\bf x})$ and $C_p$. In fact, the pool for choices is large. One may take $d({\bf x})=1,e^{\sin(x+y)},e^{\cos(x+y)}$ and $C_p=\sup\{\left|(a_1)_t/a_1\right|+\left|(a_2)_t/a_2\right|\}$, etc.
\end{remark}

In the rest of this paper, we always take functions $p$, $p_1$ and $p_2$ as those given in \eqref{pp1p2}.
 Consequently the two inequalities \eqref{uPDu} and \eqref{uADu} are true basing on {\bf V1}.

\section{The sub-diffusion equation with time-space dependent coefficients}\label{sub-diffusion}

\subsection{The numerical scheme}

Let $u_h^k$ be the numerical approximations of $u({\bf x}_h,t_k)$, ${\bf x}_h\in \Omega_h,0\leq k\leq N$.
Denote $u_h^{n-\theta}:=(1-\theta)u_h^n+\theta u_h^{n-1}$, $f_h^{n-\theta}:=f({\bf x}_h,t_{n-\theta})$ for $n\geq 1$ and $\varphi_h:=\varphi({\bf x}_h)$.

From Section \ref{new-technique} (noting that $\beta=\alpha$ here), it is natural to construct an implicit scheme to solve the sub-diffusion problem \eqref{eq1}--\eqref{eq2} in the following form:
\begin{align}\label{sub-sc1}
&({\cal D}_{\tau}^\alpha u_h)^{n-\theta}={\cal A}_h^{n-\theta} u_h^{n-\theta}+f_h^{n-\theta}, \quad  {\bf x}_h\in \Omega_h, 1\leq n\leq N;\\\label{sub-sc2}
& u_h^0=\varphi_h, \quad  {\bf x}_h\in \Omega_h,
\end{align}
subject to the zero boundary conditions.


To perform the numerical analysis, we rewrite the scheme \eqref{sub-sc1}--\eqref{sub-sc2} in  the following  matrix representation:
\begin{align}\label{sub-matrix-form1}
&({\cal D}_{\tau}^\alpha {\bf u})^{n-\theta}=\left[-({\bf P}^{n-\theta})^{-1}{\bf A}^{n-\theta}+{\bf B}^{n-\theta}+{\bf C}^{n-\theta}\right]{\bf u}^{n-\theta}+{\bf f}^{n-\theta}, \quad 1\leq n\leq N;\\\label{sub-matrix-form2}
& {\bf u}^0= \varPhi;
\end{align}
where ${\bf u}^{n-\theta}:=(1-\theta){\bf u}^{n}+\theta {\bf u}^{n-1}$ and
\begin{align*}
&{\bf f}^{n-\theta}:=(f_{1,1}^{n-\theta},\cdots,f_{M_x-1,1}^{n-\theta},f_{1,2}^{n-\theta},\cdots,f_{M_x-1,2}^{n-\theta},\cdots\cdots,f_{1,M_y-1}^{n-\theta},\cdots,f_{M_x-1,M_y-1}^{n-\theta})^T,\\
& \varPhi:=(\varphi_{1,1},\cdots,\varphi_{M_x-1,1}\,\varphi_{1,2}, \cdots,\varphi_{M_x-1,2},\cdots\cdots,\varphi_{1,M_y-1},\cdots,\varphi_{M_x-1,M_y-1})^T.
\end{align*}

\subsection{Stability and convergence}

The next lemma shows a discrete fractional Gr\"{o}nwall inequality which is a slightly modified version of \cite[Theorem 3.1]{LiaoGronwall} (noting that $\pi_A=11/4$ and $\rho$ is the maximum time-step ratio (see  {Appendix})).
\begin{lemma}\cite[Lemma 3.2]{LyuVong2020diffu-wave}\label{gronwall-lemma1}
Let $(g^n)_{n=1}^N$ and $(\lambda_l)_{l=0}^{N-1}$ be given nonnegative sequences. Assume that there exists a constant $\Lambda$ (independent of the step sizes) such that $\Lambda\geq\sum_{l=0}^{N-1}\lambda_l$, and that the maximum step size satisfies
$$\max_{1\leq n\leq N}\tau_n\leq \frac1{{^\alpha\sqrt{4\pi_A\Gamma(2-\alpha)\Lambda}}}.$$
Then, for any nonnegative sequences $(u^k)_{k=0}^N$ and $(v^k)_{k=0}^N$  satisfying
\begin{align*}
\sum_{k=1}^nA_{n-k}^{(n)}\nabla_{\tau} \left[(u^k)^2+(v^k)^2 \right] \leq& \sum_{k=1}^n \lambda_{n-k}\left(u^{k-\theta}+v^{k-\theta}\right)^2+ (u^{n-\theta}+v^{n-\theta})g^{n}, \quad  1\leq n\leq N,
\end{align*}
it holds that
\begin{align}\label{gronwall2}
u^n+v^n\leq  4E_{\alpha}(4\max(1,\rho)\pi_A\Lambda t_n^\alpha)\left(u^0+v^0+\max_{1\leq k\leq n}\sum_{j=1}^kP_{k-j}^{(k)}g^{j} \right)\quad \mbox{for}~1\leq n\leq N,
\end{align}
where $E_{\alpha}(z)=\sum_{k=0}^\infty\frac{z^k}{\Gamma(1+k\alpha)}$ is the Mittag-Leffler function.
\end{lemma}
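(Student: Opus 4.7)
The strategy is to reduce the two-sequence hypothesis to the single-sequence discrete fractional Gr\"{o}nwall inequality \cite[Theorem 3.1]{LiaoGronwall}. The quadratic combination $(u^k)^2+(v^k)^2$ on the left-hand side immediately suggests introducing the auxiliary sequence
\[
W^k := \sqrt{(u^k)^2+(v^k)^2}, \qquad 0 \le k \le N,
\]
so that $\sum_{k=1}^n A_{n-k}^{(n)}\nabla_\tau(W^k)^2$ reproduces the canonical left-hand side of Liao's inequality exactly.

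To push the right-hand side into the same canonical form, I would define the convex combination $W^{k-\theta} := (1-\theta)W^k + \theta W^{k-1}$ and use two elementary observations. First, viewing $(u^{k-\theta},v^{k-\theta})$ as a convex combination of $(u^k,v^k)$ and $(u^{k-1},v^{k-1})$ in $\mathbb{R}^2$, the triangle inequality applied to the Euclidean norm gives
\[
\sqrt{(u^{k-\theta})^2+(v^{k-\theta})^2} \le (1-\theta)\sqrt{(u^k)^2+(v^k)^2} + \theta\sqrt{(u^{k-1})^2+(v^{k-1})^2} = W^{k-\theta}.
\]
Second, by $(a+b)^2 \le 2(a^2+b^2)$ together with nonnegativity, $(u^{k-\theta}+v^{k-\theta})^2 \le 2(W^{k-\theta})^2$ and $u^{n-\theta}+v^{n-\theta} \le \sqrt{2}\,W^{n-\theta}$.

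Substituting these estimates into the hypothesis collapses it into the single-sequence inequality
\[
\sum_{k=1}^n A_{n-k}^{(n)}\nabla_\tau(W^k)^2 \le \sum_{k=1}^n \tilde\lambda_{n-k}(W^{k-\theta})^2 + W^{n-\theta}\tilde g^n,
\]
with $\tilde\lambda_l := 2\lambda_l$ (so one may take $\tilde\Lambda := 2\Lambda$) and $\tilde g^n := \sqrt{2}\,g^n$. The stated step-size restriction $\tau \le 1/\sqrt[\alpha]{4\pi_A\Gamma(2-\alpha)\Lambda}$ is precisely what is required for the parameter $\tilde\Lambda$ in \cite[Theorem 3.1]{LiaoGronwall}. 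Applying that theorem to bound $W^n$, and then converting back via $u^n+v^n \le \sqrt{2}\,W^n$ and $W^0 \le u^0+v^0$, produces the claimed estimate, with the leading factor $4$ absorbing the $\sqrt{2}$'s accumulated during the reduction.

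The only genuinely nontrivial step is the triangle-inequality bound for the quadratic form evaluated at the convex combination $t_{n-\theta}$: it is this passage that correctly ties together the quadratic left-hand side and the affine structure of the right-hand side while keeping $\Lambda$ unchanged up to a factor of two. Once that is in place, the rest is mechanical bookkeeping of numerical constants, and no fresh analytical input beyond the cited single-sequence Gr\"{o}nwall theorem is needed.
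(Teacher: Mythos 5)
First, note that the paper does not actually prove this lemma: it is imported verbatim from \cite[Lemma 3.2]{LyuVong2020diffu-wave}, and the surrounding text (and the parallel Lemma \ref{gronwall-lemma2}) makes clear that the intended proof is to re-run the single-sequence argument of \cite[Theorem 3.1]{LiaoGronwall} with the pair $(u^k,v^k)$ carried through directly. Your proposal takes a genuinely different route: a black-box reduction to the single-sequence theorem via $W^k:=\sqrt{(u^k)^2+(v^k)^2}$. The structural core of your reduction is sound. The left-hand side matches exactly, and the key step --- the triangle inequality $\sqrt{(u^{k-\theta})^2+(v^{k-\theta})^2}\le (1-\theta)W^k+\theta W^{k-1}=W^{k-\theta}$ for the convex combination --- is correct and is indeed the nontrivial observation. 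The final constant bookkeeping also checks out on the conclusion side: with $\tilde\Lambda=2\Lambda$ the Mittag--Leffler argument becomes $2\max(1,\rho)\pi_A\cdot 2\Lambda\,t_n^\alpha=4\max(1,\rho)\pi_A\Lambda t_n^\alpha$, and $u^n+v^n\le\sqrt2\,W^n$ together with $\tilde g^j=\sqrt2\,g^j$ and $W^0\le u^0+v^0$ yields the prefactor $2\sqrt2\cdot\sqrt2=4$.

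The one place where your argument has a real gap is the step-size restriction, which you assert "is precisely what is required for $\tilde\Lambda$" without checking. Your reduction inflates the summability constant to $\tilde\Lambda=2\Lambda$, so the single-sequence theorem must be invoked under \emph{its} mesh condition evaluated at $2\Lambda$, which is stricter by the factor $2^{1/\alpha}$ than the same condition at $\Lambda$. Whether the hypothesis $\max_n\tau_n\le 1/\sqrt[\alpha]{4\pi_A\Gamma(2-\alpha)\Lambda}$ stated in the lemma absorbs this depends entirely on the exact constant in \cite[Theorem 3.1]{LiaoGronwall}: if that theorem requires $\tau\le 1/\sqrt[\alpha]{2\pi_A\Gamma(2-\alpha)\Lambda}$, your numbers close up exactly; if it already requires $\tau\le 1/\sqrt[\alpha]{4\pi_A\Gamma(2-\alpha)\Lambda}$, your reduction only proves the lemma under the stronger condition $\tau\le 1/\sqrt[\alpha]{8\pi_A\Gamma(2-\alpha)\Lambda}$. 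The fact that the three-sequence analogue (Lemma \ref{gronwall-lemma2}) keeps the \emph{same} step-size condition while its prefactors scale as $6$ rather than $4$ is evidence that the authors' direct adaptation does not pay the factor-$m$ penalty in the mesh restriction that your reduction necessarily incurs for $m$ sequences (your method would demand $\tilde\Lambda=3\Lambda$ there). So you must either verify the precise constant in the cited theorem or accept a constant-level weakening of the hypothesis; a secondary point to confirm is that the cited single-sequence theorem is available in the $\theta$-weighted form $\sum_k\lambda_{n-k}(v^{k-\theta})^2+v^{n-\theta}g^n$ that your reduced inequality lands on, since otherwise a further adaptation step is hidden in the citation.
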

The coefficients $P_{n-j}^{(n)}$ in \eqref{gronwall2} are called the discrete complementary convolution kernels (see more details in \cite{LiaoGronwall}), and they satisfy (\cite[Lemmma 2.1]{LiaoGronwall})
\begin{align}\label{P-proper}
0\leq P_{n-j}^{(n)}\leq \pi_A\Gamma(2-\alpha)\tau_j^\alpha,\quad \sum_{j=1}^nP_{n-j}^{(n)}\omega_{1-\alpha}(t_j)\leq\pi_A,\quad 1\leq j\leq n\leq N.
\end{align}

 For $u_h,v_h$ belonging to the space of grid functions which vanish on $\partial\Omega_h$, we introduce the discrete inner product $\langle u,v \rangle:=h_xh_y\sum_{{\bf x}_h\in\Omega_h}u_hv_h$, the discrete $L^2$-norm
  $ \|u\|:=\sqrt{\langle u,u \rangle}$, 
  the discrete $H^1$ seminorms $\|\delta_x u\|$ and $\|\delta_y u\|$, and $\|\nabla_h u\|:=\sqrt{\|\delta_x u\|^2+\|\delta_y u\|^2}$. Suppose ${\tilde C}_0$, ${\hat C}_0$, ${\tilde C}_l$ and ${\hat C}_l$ are positive constants such that
$${\tilde C}_0 \leq |p({\bf x},t)|\leq {\hat C}_0, \quad {\tilde C}_l \leq |p_l({\bf x},t)|\leq {\hat C}_l\quad \mbox{for} \quad l=1,2,3,4.$$
Now we are going to show the stability and convergence for the proposed scheme \eqref{sub-sc1}--\eqref{sub-sc2}.
\begin{theorem}\label{Stability-sub}
If {\bf V1} is valid, the numerical scheme \eqref{sub-sc1}--\eqref{sub-sc2} is unconditionally stable and the discrete solutions $u_h^n~({\bf x}_h\in \Omega_h,1\leq n\leq N)$  satisfy
\begin{align*}
\|\nabla_h u^n\|
\leq& C \left(\|\nabla_h u^0\|+\max_{1\leq k\leq n}\sum_{j=1}^k P_{k-j}^{(k)} \|\nabla_h f^{j-\theta}\|\right)
\leq C \left(\|\nabla_h u^0\|+\max_{1\leq k\leq n}\{t_k^\alpha \|\nabla_h f^{k-\theta}\}\|\right).
\end{align*}
\end{theorem}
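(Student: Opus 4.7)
The plan is to perform a discrete $H^1$ energy estimate and then invoke the fractional Gr\"onwall lemma. I would test the matrix form \eqref{sub-matrix-form1} with $2({\bf u}^{n-\theta})^T{\bf A}^{n-\theta}$ (mesh-scaled Euclidean pairing). By symmetry of ${\bf A}^{n-\theta}$ the left-hand side becomes $2({\bf u}^{n-\theta})^T{\bf A}^{n-\theta}({\cal D}_\tau^\alpha {\bf u})^{n-\theta}$, and inequality \eqref{uADu} of Lemma \ref{zQDz} immediately gives the lower bound $\sum_{k=1}^n A_{n-k}^{(n)}\nabla_\tau\bigl[(u^k)^2+(v^k)^2\bigr]$, where $u^k:=\bigl(({\bf u}_x^k)^T{\bf P}_1^{k-\theta}{\bf u}_x^k\bigr)^{1/2}$ and $v^k:=\bigl(({\bf u}_y^k)^T{\bf P}_2^{k-\theta}{\bf u}_y^k\bigr)^{1/2}$ are norms equivalent to $\|\delta_x u^k\|$ and $\|\delta_y u^k\|$ by the uniform bounds $\tilde C_l\leq p_l\leq\hat C_l$. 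This is the key estimate that creates a usable time derivative of the $H^1$-type energy.

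On the right-hand side three pieces must be handled. The diffusion contribution rewrites as $-2\|({\bf P}^{n-\theta})^{-1/2}{\bf A}^{n-\theta}{\bf u}^{n-\theta}\|^2\leq -(2/\hat C_0)\|{\bf A}^{n-\theta}{\bf u}^{n-\theta}\|^2$, producing a strictly negative reserve. The convection/reaction pieces $2({\bf A}^{n-\theta}{\bf u}^{n-\theta})^T({\bf B}^{n-\theta}+{\bf C}^{n-\theta}){\bf u}^{n-\theta}$ I would split by Young's inequality; using {\bf V2} ($|b_l|\leq C_l$) plus the first-order character of ${\bf B}^{n-\theta}$ and a discrete Poincar\'e inequality for ${\bf C}^{n-\theta}{\bf u}^{n-\theta}$, one obtains the bound $\varepsilon\|{\bf A}^{n-\theta}{\bf u}^{n-\theta}\|^2+C_\varepsilon\|\nabla_h u^{n-\theta}\|^2$, and by choosing $\varepsilon$ small the first term is absorbed into the diffusion reserve. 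For the source I would \emph{not} use Young's inequality against $\|{\bf A}^{n-\theta}{\bf u}^{n-\theta}\|$ (that would lead to $\|{\bf f}^{n-\theta}\|$), but instead expand through the Kronecker structure $({\bf u}^{n-\theta})^T{\bf A}^{n-\theta}{\bf f}^{n-\theta}=({\bf u}_x^{n-\theta})^T{\bf P}_1^{n-\theta}{\bf f}_x^{n-\theta}+({\bf u}_y^{n-\theta})^T{\bf P}_2^{n-\theta}{\bf f}_y^{n-\theta}$ and apply Cauchy--Schwarz directly, getting $C(u^{n-\theta}+v^{n-\theta})\|\nabla_h f^{n-\theta}\|$. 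Using this version is exactly what produces the $\|\nabla_h f^{j-\theta}\|$ in the stated bound.

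Collecting everything I would arrive at $\sum_{k=1}^n A_{n-k}^{(n)}\nabla_\tau\bigl[(u^k)^2+(v^k)^2\bigr]\leq \lambda_0(u^{n-\theta}+v^{n-\theta})^2+(u^{n-\theta}+v^{n-\theta})g^n$ with $g^n:=C\|\nabla_h f^{n-\theta}\|$ and $\lambda_0$ depending only on coefficient bounds. Lemma \ref{gronwall-lemma1} with $\Lambda=\lambda_0$ then delivers $u^n+v^n\leq C\bigl(u^0+v^0+\max_{1\leq k\leq n}\sum_{j=1}^k P_{k-j}^{(k)}g^j\bigr)$; the induced mesh condition $\max_n\tau_n\leq C(\alpha,\lambda_0)$ is \emph{independent of the spatial step}, which is what ``unconditionally stable'' means in this context. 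Converting back via the equivalence $u^n+v^n\sim\|\nabla_h u^n\|$ yields the first bound. For the second bound I would use \eqref{P-proper}: writing $\sum_j P_{k-j}^{(k)}\|\nabla_h f^{j-\theta}\|=\sum_j[P_{k-j}^{(k)}\omega_{1-\alpha}(t_j)]\cdot[\Gamma(1-\alpha) t_j^\alpha\|\nabla_h f^{j-\theta}\|]$ and using $\sum_j P_{k-j}^{(k)}\omega_{1-\alpha}(t_j)\leq \pi_A$ gives the required $C\max_{1\leq j\leq k}\{t_j^\alpha\|\nabla_h f^{j-\theta}\|\}$.

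The main obstacle is the convection/reaction estimate: naively distributing ${\bf A}^{n-\theta}$ onto ${\bf B}^{n-\theta}{\bf u}^{n-\theta}$ through the Kronecker expansion introduces $(I_y\otimes S_x){\bf B}^{n-\theta}{\bf u}^{n-\theta}$, a genuinely \emph{second}-order discrete expression in $u$ that is not controlled by $\|\nabla_h u\|$. The fix is to keep the pairing as $({\bf A}^{n-\theta}{\bf u}^{n-\theta})^T{\bf B}^{n-\theta}{\bf u}^{n-\theta}$ and to ``borrow'' from the strictly positive quadratic form $({\bf A}^{n-\theta}{\bf u}^{n-\theta})^T({\bf P}^{n-\theta})^{-1}({\bf A}^{n-\theta}{\bf u}^{n-\theta})$ supplied by the lower bound on $p$ from Lemma \ref{functionP}; this is precisely the step where the explicit choice of $p$, $p_1$, $p_2$ in \eqref{pp1p2} (and hence {\bf V1}) is indispensable.
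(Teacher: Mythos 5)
Your proposal is correct and follows essentially the same route as the paper: testing with $({\bf A}^{n-\theta}{\bf u}^{n-\theta})^T$, invoking \eqref{uADu} for the fractional term, absorbing the ${\bf B}^{n-\theta}$ and ${\bf C}^{n-\theta}$ contributions into the positive quadratic form $({\bf A}^{n-\theta}{\bf u}^{n-\theta})^T({\bf P}^{n-\theta})^{-1}({\bf A}^{n-\theta}{\bf u}^{n-\theta})$ via Young's inequality, pairing the source through the Kronecker structure to obtain $\|\nabla_h f^{n-\theta}\|$, and concluding with Lemma \ref{gronwall-lemma1} and \eqref{P-proper}. Your remarks on why the source must not be estimated against $\|{\bf A}^{n-\theta}{\bf u}^{n-\theta}\|$ and on where the explicit choice of $p$, $p_1$, $p_2$ enters match the paper's reasoning.
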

\begin{proof}
Multiplying  both sides of \eqref{sub-matrix-form1} by $({\bf A}^{n-\theta}{\bf u}^{n-\theta})^T$ gives:
\begin{align}\nonumber
&({\bf u}^{n-\theta})^T{\bf A}^{n-\theta}({\cal D}_{\tau}^\alpha {\bf u})^{n-\theta}+({\bf A}^{n-\theta}{\bf u}^{n-\theta})^T({\bf P}^{n-\theta})^{-1}({\bf A}^{n-\theta}{\bf u}^{n-\theta})\\\label{stb1}
=&({\bf u}^{n-\theta})^T\left[{\bf A}^{n-\theta}{\bf B}^{n-\theta}+{\bf A}^{n-\theta}{\bf C}^{n-\theta}\right]{\bf u}^{n-\theta}+({\bf u}^{n-\theta})^T{\bf A}^{n-\theta}{\bf f}^{n-\theta},\quad 1\leq n\leq N.
\end{align}
The first term on the left-hand side of \eqref{stb1} is evaluated by  \eqref{uADu}.

For the terms on the right-hand side, we first notice that for a real vector ${\bf z}=(z_1,z_2,\ldots,z_{M_x-1})^T$,
\begin{align*}
&4h_x^2({\hat S}_x{\bf z})^T({\hat S}_x{\bf z})=\sum_{i=1}^{M_x-2}(u_{i+1}-u_i)^2+u_{M_x-1}^2\leq
u_1^2+\sum_{i=1}^{M_x-2}(u_{i+1}-u_i)^2+u_{M_x-1}^2=h_x^2(S_x{\bf z})^T(S_x{\bf z}),\\
&4h_x^2({\hat S}_x^T{\bf z})^T({\hat S}_x^T{\bf z})=u_1^2+\sum_{i=1}^{M_x-2}(u_{i+1}-u_i)^2\leq h_x^2(S_x{\bf z})^T(S_x{\bf z}).
\end{align*}
Then it further holds that
\begin{align*}
&\max\left\{[(I_y\otimes {\hat S}_x){\bf u}^k]^T[(I_y\otimes {\hat S}_x){\bf u}^k],[(I_y\otimes {\hat S}_x^T){\bf u}^k]^T[(I_y\otimes {\hat S}_x^T){\bf u}^k]\right\}
\leq \frac14[(I_y\otimes  S_x){\bf u}^k]^T[(I_y\otimes S_x){\bf u}^k],\\
&\max\left\{[({\hat S}_y\otimes I_x){\bf u}^k]^T[({\hat S}_y\otimes I_x){\bf u}^k],[( {\hat S}_y^T\otimes I_x){\bf u}^k]^T[({\hat S}_y^T\otimes I_x){\bf u}^k]\right\}
\leq \frac14[(S_y\otimes I_x){\bf u}^k]^T[(S_y\otimes I_x){\bf u}^k].
\end{align*}
Thus the Cauchy-Schwarz inequality leads to
\begin{align}\nonumber
&({\bf u}^{n-\theta})^T\left[({\bf B}^{n-\theta})^T{\bf B}^{n-\theta}\right]{\bf u}^{n-\theta}\\\nonumber
\leq& \left[\left(I_y\otimes({\hat S}_x-{\hat S}_x^T)\right){{\bf u}^{n-\theta}} \right]^T({\bf P}_3^{n-\theta})^2\left[\left(I_y\otimes({\hat S}_x-{\hat S}_x^T)\right){{\bf u}^{n-\theta}} \right]\\\nonumber
&+\left[\left(({\hat S}_y-{\hat S}_y^T)\otimes I_x\right){\bf u}^{n-\theta} \right]^T({\bf P}_4^{n-\theta})^2\left[\left(({\hat S}_y-{\hat S}_y^T)\otimes I_x\right){\bf u}^{n-\theta} \right]\\\nonumber
\leq& 2{\hat C}_3^2\left\{\left[(I_y\otimes{\hat S}_x){\bf u}^{n-\theta} \right]^T \left[(I_y\otimes{\hat S}_x){\bf u}^{n-\theta} \right]+\left[(I_y\otimes{\hat S}_x^T){\bf u}^{n-\theta} \right]^T \left[(I_y\otimes{\hat S}_x^T){\bf u}^{n-\theta} \right] \right\}\\\nonumber
&+2{\hat C}_4^2\left\{\left[({\hat S}_y\otimes I_x){\bf u}^{n-\theta} \right]^T \left[({\hat S}_y\otimes I_x){\bf u}^{n-\theta} \right]+\left[({\hat S}_y^T\otimes I_x){\bf u}^{n-\theta} \right]^T \left[({\hat S}_y^T\otimes I_x){\bf u}^{n-\theta} \right]\right\}\\\nonumber
\leq& {\hat C}_3^2 ({\bf u}_x^{n-\theta})^T{\bf u}_x^{n-\theta}+{\hat C}_4^2 ({\bf u}_y^{n-\theta})^T{\bf u}_y^{n-\theta}\\\label{stab2-1}
\leq& C_5\left[ ({\bf u}_x^{n-\theta})^T{\bf P}_1^{n-\theta}{\bf u}_x^{n-\theta}+({\bf u}_y^{n-\theta})^T{\bf P}_2^{n-\theta}{\bf u}_y^{n-\theta} \right],
\end{align}
where $C_5:=\max\{{\hat C}_3^2,{\hat C}_4^2\}\cdot\max\{1/{\tilde C}_1,1/{\tilde C}_2\}$. Then the first part of the first term on the right-hand side of \eqref{stb1} can be estimated as
\begin{align}\nonumber
&2({\bf u}^{n-\theta})^T{\bf A}^{n-\theta}{\bf B}^{n-\theta}{\bf u}^{n-\theta}\\\nonumber
\leq&\frac{1}{{\hat C}_0}({\bf A}^{n-\theta}{\bf u}^{n-\theta})^T({\bf A}^{n-\theta}{\bf u}^{n-\theta})+{\hat C}_0({\bf u}^{n-\theta})^T\left[({\bf B}^{n-\theta})^T{\bf B}^{n-\theta}\right]{\bf u}^{n-\theta}\\\label{stb2}
\leq& ({\bf A}^{n-\theta}{\bf u}^{n-\theta})^T({\bf P}^{n-\theta})^{-1}({\bf A}^{n-\theta}{\bf u}^{n-\theta})+
{\hat C}_0C_5\left[ ({\bf u}_x^{n-\theta})^T{\bf P}_1^{n-\theta}{\bf u}_x^{n-\theta}+({\bf u}_y^{n-\theta})^T{\bf P}_2^{n-\theta}{\bf u}_y^{n-\theta} \right].
\end{align}
Noticing the embedding inequality $\|u^k\|\leq C_{\Omega} \|\nabla u^k\|$, $k\geq 0$, it leads to
\begin{align}\label{embedding-ine}
({\bf u}^{n-\theta})^T{\bf u}^{n-\theta}\leq C_{\Omega}^2 \left[ ({\bf u}_x^{n-\theta})^T{\bf u}_x^{n-\theta}+({\bf u}_y^{n-\theta})^T{\bf u}_y^{n-\theta}\right].
\end{align}
Then similar to the derivation of \eqref{stb2}, one gets
\begin{align}\nonumber
&2({\bf u}^{n-\theta})^T{\bf A}^{n-\theta}{\bf C}^{n-\theta}{\bf u}^{n-\theta}\\\nonumber
\leq&\frac{1}{{\hat C}_0}({\bf A}^{n-\theta}{\bf u}^{n-\theta})^T({\bf A}^{n-\theta}{\bf u}^{n-\theta})+{\hat C}_0({\bf u}^{n-\theta})^T\left[({\bf C}^{n-\theta})^T{\bf C}^{n-\theta}\right]{\bf u}^{n-\theta}\\\nonumber
\leq& ({\bf A}^{n-\theta}{\bf u}^{n-\theta})^T({\bf P}^{n-\theta})^{-1}({\bf A}^{n-\theta}{\bf u}^{n-\theta})+
{\hat C}_0C_3^2({\bf u}^{n-\theta})^T{\bf u}^{n-\theta}\\\label{stb3}
\leq& ({\bf A}^{n-\theta}{\bf u}^{n-\theta})^T({\bf P}^{n-\theta})^{-1}({\bf A}^{n-\theta}{\bf u}^{n-\theta})+
{\hat C}_0C_6\left[ ({\bf u}_x^{n-\theta})^T{\bf P}_1^{n-\theta}{\bf u}_x^{n-\theta}+({\bf u}_y^{n-\theta})^T{\bf P}_2^{n-\theta}{\bf u}_y^{n-\theta} \right],
\end{align}
where $C_6:=\max\{1/{\tilde C}_1,1/{\tilde C}_2\}C_3^2C_{\Omega}^2$.

For the last term on the right-hand side of \eqref{stb1}, we have
\begin{align}\nonumber
({\bf u}^{n-\theta})^T{\bf A}^{n-\theta}{\bf f}^{n-\theta}
=& \left[(I_y\otimes S_x){\bf u}^{n-\theta}\right]^T{\bf P}_1^{n-\theta}(I_y\otimes S_x){\bf f}^{n-\theta}\\\nonumber
&+\left[(S_y\otimes I_x){\bf u}^{n-\theta}\right]^T{\bf P}_2^{n-\theta}(S_y\otimes I_x){\bf f}^{n-\theta}\\\nonumber
=& ({\bf u}_x^{n-\theta})^T{\bf P}_1^{n-\theta}(I_y\otimes S_x){\bf f}^{n-\theta}
+({\bf u}_y^{n-\theta})^T{\bf P}_2^{n-\theta}(S_y\otimes I_x){\bf f}^{n-\theta}\\\nonumber
\leq& \sqrt{{\hat C}_1} \sqrt{({\bf u}_x^{n-\theta})^T{\bf P}_1^{n-\theta}{\bf u}_x^{n-\theta}}\sqrt{\left[ (I\otimes S_x){\bf f}^{n-\theta}\right]^T(I\otimes S_x){\bf f}^{n-\theta}}\\\label{stb4}
&+\sqrt{{\hat C}_2} \sqrt{({\bf u}_y^{n-\theta})^T{\bf P}_2^{n-\theta}{\bf u}_y^{n-\theta}}\sqrt{\left[ (S_y\otimes I){\bf f}^{n-\theta}\right]^T(S_y\otimes I){\bf f}^{n-\theta}}.
\end{align}
Therefore, it follows from \eqref{stb1}--\eqref{stb4} and \eqref{uADu} that
\begin{align}\nonumber
&\sum_{k=1}^nA_{n-k}^{(n)}\nabla_\tau \left[({\bf u}_x^k)^T{\bf P}_1^{k-\theta}{\bf u}_x^k + ({\bf u}_y^k)^T{\bf P}_2^{k-\theta}{\bf u}_y^k\right]\\\nonumber
\leq&{\hat C}_0(C_5+C_6)\left[ ({\bf u}_x^{n-\theta})^T{\bf P}_1^{n-\theta}({\bf u}_x^{n-\theta})+({\bf u}_y^{n-\theta})^T{\bf P}_2^{n-\theta}({\bf u}_y^{n-\theta}) \right]\\\nonumber
& +2\sqrt{{\hat C}_1} \sqrt{({\bf u}_x^{n-\theta})^T{\bf P}_1^{n-\theta}{\bf u}_x^{n-\theta}}\sqrt{\left[ (I_y\otimes S){\bf f}^{n-\theta}\right]^T(I_y\otimes S){\bf f}^{n-\theta}}\\\label{stb-x}
&+2\sqrt{{\hat C}_2} \sqrt{({\bf u}_y^{n-\theta})^T{\bf P}_2^{n-\theta}{\bf u}_y^{n-\theta}}\sqrt{\left[ (S\otimes I_x){\bf f}^{n-\theta}\right]^T(S\otimes I_x){\bf f}^{n-\theta}}.
\end{align}
In view of the relationships
$$\|\delta_x u^{k-\theta}\|=\sqrt{h_xh_y({\bf u}_x^{k-\theta})^T({\bf u}_x^{k-\theta})}\quad \mbox{and}\quad \|\delta_y u^{k-\theta}\|=\sqrt{h_xh_y({\bf u}_y^{k-\theta})^T({\bf u}_y^{k-\theta})},$$ where $k\geq 0$, we define the following norms
$$\|\delta_x u^{k-\theta}\|_{P_1}:=\sqrt{h_xh_y({\bf u}_x^{k-\theta})^T{\bf P}_1^{k-\theta}({\bf u}_x^{k-\theta})}\quad \mbox{and}\quad
\|\delta_y u^{k-\theta}\|_{P_2}:=\sqrt{h_xh_y({\bf u}_y^{k-\theta})^T{\bf P}_2^{k-\theta}({\bf u}_y^{k-\theta})}.$$
Moreover, denote
$$\|v\|_{P_k}^{(n-\theta)}:=(1-\theta)\|v^n\|_{P_k}+\theta\|v^{n-1}\|_{P_k}  \quad \mbox{for}~v_h\in\Omega_h~\mbox{and}~k=1,2.$$
Then the triangle inequality yields $\|v^{n-\theta}\|_{P_k}\leq\|v^{(n-\theta)}\|_{P_k}$.

Now, take $C_7:=2\max\{\sqrt{{\hat C}_1} ,\sqrt{{\hat C}_2} \}$. Multiplying  both sides of the inequality \eqref{stb-x} by $h_xh_y$, it follows
\begin{align*}
&\sum_{k=1}^nA_{n-k}^{(n)}\nabla_\tau \left[\|\delta_x u^k\|_{P_1}^2 + \|\delta_y u^k\|_{P_1}^1 \right]\\
\leq&{\hat C}_0(C_5+C_6)\left[ \|\delta_x u^{n-\theta}\|_{P_1}^2+ \|\delta_y u^{n-\theta}\|_{P_2}^2 \right] +2\sqrt{{\hat C}_1} \|\delta_x u^{n-\theta}\|_{P_1}\cdot\|\delta_x f^{n-\theta}\|\\
&+2\sqrt{{\hat C}_2} \|\delta_y u^{n-\theta}\|_{P_2}\cdot\|\delta_y f^{n-\theta}\|\\
\leq&{\hat C}_0(C_5+C_6)\left[ \|\delta_x u^{n-\theta}\|_{P_1}^2+ \|\delta_y u^{n-\theta}\|_{P_2}^2 \right] +C_7\left( \|\delta_x u^{n-\theta}\|_{P_1}+ \|\delta_y u^{n-\theta}\|_{P_2} \right) \|\nabla_h f^{n-\theta}\|\\
\leq& {\hat C}_0(C_5+C_6)\left[ \left(\|\delta_x u\|_{P_1}^{(n-\theta)}\right)^2+\left(\|\delta_y u\|_{P_2}^{(n-\theta)}\right)^2\right]+C_7 \left( \|\delta_x u\|_{P_1}^{(n-\theta)}+ \|\delta_y u\|_{P_2}^{(n-\theta)} \right) \|\nabla_h f^{n-\theta}\|.
\end{align*}
Applying Lemma \ref{gronwall-lemma1}, we get
\begin{align*}
&\|\delta_x u^n\|_{P_1} + \|\delta_y u^n\|_{P_2}\\
\leq& 4E_{\beta}\left(4\max(1,\rho)\pi_A {\hat C}_0(C_5+C_6) t_n^\beta\right) \left[\|\delta_x u^0\|_{P_1} + \|\delta_y u^0\|_{P_2}+C_7\max_{1\leq k\leq n}\sum_{j=1}^k P_{k-j}^{(k)} \|\nabla_h f^{k-\theta}\|\right].
\end{align*}
Since $\|\delta_x u^0\|_{P_1}\leq \sqrt{{\hat C}_1}\|\delta_x u^0\|$, $\|\delta_y u^0\|_{P_2}\leq \sqrt{{\hat C}_2}\|\delta_y u^0\|$, and
\begin{align*}
\|\delta_x u^n\|=\sqrt{h_xh_y({\bf u}_x^n)^T{\bf u}_x^n} \leq \frac1{\sqrt{{\tilde C}_1}}\|\delta_x u^n\|_{P_1},
\quad
\|\delta_y u^n\|=\sqrt{h_xh_y({\bf u}_y^n)^T{\bf u}_y^n} \leq \frac1{\sqrt{{\tilde C}_2}}\|\delta_y u^n\|_{P_2},
\end{align*}
we obtain
\begin{align*}
\|\nabla_h u^n\|\leq \|\delta_x u^n\|+\|\delta_y u^n\|
\leq& \max\{\frac1{\sqrt{{\tilde C}_1}},\frac1{\sqrt{{\tilde C}_2}}\}\cdot\left( \|\delta_x u^n\|_{P_1} + \|\delta_y u^n\|_{P_2}\right)\\
\leq& C \left(\|\nabla_h u^0\|+\max_{1\leq k\leq n}\sum_{j=1}^k P_{k-j}^{(k)} \|\nabla_h f^{j-\theta}\|\right)\\
\leq& C \left(\|\nabla_h u^0\|+\max_{1\leq k\leq n}\{t_k^\alpha \|\nabla_h f^{k-\theta}\|\}\right),
\end{align*}
where \eqref{P-proper} has been utilized.
\end{proof}

\begin{remark}
We remark that one may consider numerical approximations of \eqref{eq1}  based on a more simplified equivalent equation with
\begin{align}\label{eq-alternative}
{\cal A}u=\partial_x(a_1\partial_xu)+\partial_y(a_2\partial_yu)+[b_1-(a_1)_x]\partial_xu+[b_2-(a_2)_y]\partial_yu+b_3u.
\end{align}
The corresponding numerical approximation of \eqref{eq-alternative} will be
\begin{align}\label{sub-sc-alternative}
&({\cal D}_{\tau}^\alpha {\bf u})^{n-\theta}=\left[-{\tilde {\bf A}}^{n-\theta}+{\tilde{\bf B}}^{n-\theta}+{\tilde {\bf C}}^{n-\theta}\right]{\bf u}^{n-\theta}+{\bf f}^{n-\theta}, \quad 1\leq n\leq N;
\end{align}
where ${\tilde {\bf A}}^{n-\theta}:=(I_y\otimes S_x)^T{\bf A}_1^{n-\theta}(I_y\otimes S_x)+(S_y\otimes I_x)^T{\bf A}_2^{n-\theta}(S_y\otimes I_x)$,  and ${\bf A}_1$, ${\bf A}_2$, ${\tilde {\bf B}}$ and ${\tilde {\bf C}}$ are  diagonal matrices with entries from corresponding variable coefficients in \eqref{eq-alternative}.

To obtain the unconditional $H^1$-norm stability and convergence, one should multiply both sides of \eqref{sub-sc-alternative} by $({\tilde {\bf A}}^{n-\theta}{\bf u}^{n-\theta})^T$, which leads to a serious difficulty for   estimating the term\\ $({\bf u}^{n-\theta})^T{\tilde {\bf A}}^{n-\theta}({\cal D}_{\tau}^\alpha {\bf u})^{n-\theta}$ on the left-hand side. This is the main reason why we introduce the concise technique in Section \ref{new-technique}. The advantage of such technique will be more obvious for diffusion-wave equation as its numerical approximations have a coupled structure {\rm (}see also{\rm \eqref{dw-matrix-form-1}--\eqref{dw-matrix-form-2})}. For more details, see the first three steps \eqref{stb1-DW}--\eqref{stb3-DW} of the proof in the next section.
\end{remark}

Next, we show the convergence of the proposed scheme \eqref{sub-matrix-form1}--\eqref{sub-matrix-form2}.
\begin{theorem}\label{Convergence-sub}
Denote $e_h^k:=u({\bf x}_h,t_k)-u_h^k~({{\bf x}_h}\in\bar\Omega_h,~0\leq k\leq N)$. If {\bf V1}, {\bf V2}, {\bf MA} and the assumptions in \eqref{regularity1}--\eqref{regularity2} are valid, the numerical scheme \eqref{sub-sc1}--\eqref{sub-sc2} is unconditionally convergent with
\begin{equation}
\|\nabla_h e^n\|\leq
C(\tau^{\{2,\gamma\sigma_1\}}+h_x^2+h_y^2),
\quad \mbox{for}\quad 1\leq n\leq N.
\end{equation}
\end{theorem}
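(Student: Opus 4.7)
The plan is to obtain the convergence result by a standard consistency plus stability argument, adapted to the $H^1$ setting. I would first insert the exact solution $U_h^k := u(\mathbf{x}_h,t_k)$ into the discrete scheme and write down the resulting truncation identity. Let $R^{n-\theta}$ denote the residual, so that $e_h^k = U_h^k - u_h^k$ satisfies exactly the same matrix system \eqref{sub-matrix-form1}--\eqref{sub-matrix-form2} as $u_h^k$ but with right-hand side $f_h^{n-\theta}$ replaced by $R^{n-\theta}$ and with $e_h^0 = 0$. I would then split $R^{n-\theta} = R^{n-\theta}_{\rm time} + R^{n-\theta}_{\rm space}$ into the temporal consistency error of the Alikhanov formula applied to $\mathcal{D}_t^\alpha u$ at $t_{n-\theta}$ and the spatial consistency error produced by replacing $\mathcal{A}$ with $\mathcal{A}_h^{n-\theta}$.

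Second, I would apply the stability estimate of Theorem \ref{Stability-sub} to the error equation. Because the equation for $e_h^k$ has the same structure as \eqref{sub-matrix-form1}, Theorem \ref{Stability-sub} gives
\begin{equation*}
\|\nabla_h e^n\| \le C \max_{1\le k\le n} \sum_{j=1}^{k} P_{k-j}^{(k)} \|\nabla_h R^{j-\theta}\|,
\end{equation*}
so the task is reduced to bounding the discrete convolution of the $H^1$-seminorm of the truncation error against the complementary kernels $P_{k-j}^{(k)}$ listed in \eqref{P-proper}.

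Third, I would estimate the two pieces of $\|\nabla_h R^{j-\theta}\|$ separately. For the spatial part, the central difference discretization of $\mathcal{A}$ combined with the regularity $\|u(\cdot,t)\|_{H^4(\Omega)}\le C_u$ from \eqref{regularity1} gives $\|\nabla_h R^{j-\theta}_{\rm space}\| \le C(h_x^2+h_y^2)$ uniformly in $j$; inserting this bound into the convolution and using $\sum_{j=1}^{k} P_{k-j}^{(k)} \le \pi_A t_k^\alpha/\omega_{1-\alpha}(t_k)\le C$ (from \eqref{P-proper}) yields the spatial part $O(h_x^2+h_y^2)$. For the temporal part, I would invoke the pointwise truncation bound for the nonuniform Alikhanov formula applied to a function $u$ with $\|\partial_t^{(k)}u\|_{H^3(\Omega)}\le C_u(1+t^{\sigma_1-k})$ (assumption \eqref{regularity2}), which is exactly the Appendix analysis referenced in the paper, and then use the weak mesh assumption {\bf MA} to run the graded–mesh convolution estimate. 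Liao–Stynes–type arguments then give
\begin{equation*}
\max_{1\le k\le n}\sum_{j=1}^{k} P_{k-j}^{(k)}\|\nabla_h R^{j-\theta}_{\rm time}\| \le C\,\tau^{\min\{2,\gamma\sigma_1\}}.
\end{equation*}
Combining these two estimates in the stability inequality produces the desired bound.

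The main obstacle is the temporal convolution estimate. The standard pointwise Alikhanov error blows up like $t_j^{\sigma_1-3}$ near the origin, and one must carefully balance this singular behavior against the graded distribution of step sizes enforced by {\bf MA} and the decay of $P_{k-j}^{(k)}$. This is where the full force of the Appendix (truncation error analysis together with the kernel properties \eqref{P-proper}) is needed, and where the sharp exponent $\min\{2,\gamma\sigma_1\}$ arises: the regime $\gamma\sigma_1 < 2$ is controlled by the initial layer, while $\gamma\sigma_1 \ge 2$ is controlled by the smooth-region second-order consistency. Everything else—handling the two spatial error components through $\|\nabla_h\cdot\|$ and absorbing low-order perturbations from the $b_\ell$ coefficients—is routine given {\bf V1}--{\bf V2} and the stability theorem already proved.
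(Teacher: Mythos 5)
Your proposal is correct and follows essentially the same route as the paper: form the error equation with residual ${\bf R}^{n-\theta}$ and ${\bf e}^0={\bf 0}$, reuse the stability argument of Theorem \ref{Stability-sub} to get $\|\nabla_h e^n\|\leq C\max_{1\leq k\leq n}\sum_{j=1}^kP_{k-j}^{(k)}\|\nabla_h R^{j-\theta}\|$, and then invoke the Appendix truncation bounds (the paper splits the residual into three pieces ${\cal T}_u$, ${\cal T}_A$, ${\cal S}$ rather than your two, but your ``temporal'' part simply absorbs both ${\cal T}_u$ and ${\cal T}_A$). Your identification of the initial-layer versus smooth-region balance as the source of the exponent $\min\{2,\gamma\sigma_1\}$ matches what the paper delegates to the cited convolution estimates.
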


\begin{proof}
Denote ${\bf e}^k$ the error vector with enteries $e_{i,j}^k$ being arranged similar to those of ${\bf u}^k$. One can easily obtain the error equations
\begin{align}\label{sub-error1}
&({\cal D}_{\tau}^\alpha {\bf e})^{n-\theta}=\left[-({\bf P}^{n-\theta})^{-1}{\bf A}^{n-\theta}+{\bf B}^{n-\theta}+{\bf C}^{n-\theta}\right]{\bf e}^{n-\theta}+{\bf R}^{n-\theta}, \quad  1\leq n\leq N;\\\label{sub-error2}
& {\bf e}^0={\bf 0},
\end{align}
where ${\bf e}^{n-\theta}:=(1-\theta){\bf e}^{n}+\theta{\bf e}^{n-1}$, and
\begin{align*}
{\bf R}^{n-\theta}:=(R_{1,1}^{n-\theta},\cdots,R_{M_x-1,1}^{n-\theta},R_{1,2}^{n-\theta},\cdots,R_{M_x-1,2}^{n-\theta},\cdots\cdots,R_{1,M_y-1}^{n-\theta},\cdots,R_{M_x-1,M_y-1}^{n-\theta})
\end{align*}
with
 \begin{align}\label{TE-sub}
 R_h^{n-\theta}=-{\cal T}_u(x_h,t_{n-\theta})+{\cal T}_A(x_h,t_{n-\theta})+{\cal S}(x_h,t_{n-\theta}),\quad {\bf x}_h\in \Omega_h.
 \end{align}
The estimation of the temporal and spatial truncation errors ${\cal T}_u(x_h,t_{n-\theta})$, ${\cal T}_A(x_h,t_{n-\theta})$ and ${\cal S}(x_h,t_{n-\theta})$ are given in the Appendix (Subsection \ref{TEA}).

Following the proof of Theorem \ref{Stability-sub}, we can get
 \begin{align}\label{sub-con-1}
\|\nabla_h e^n\|
\leq C \max_{1\leq k\leq n}\sum_{j=1}^k P_{k-j}^{(k)} \|\nabla_h R^{j-\theta}\|,\quad 1\leq n\leq N.
\end{align}
 Therefore, the claimed result can be verified by combining \eqref{sub-con-1}, \eqref{nab_S} and \eqref{nab_TA}--\eqref{nab_Tu}.
\end{proof}

\section{The diffusion-wave equation with time-space dependent coefficients}\label{diffusion-wave}

\subsection{The numerical scheme}
A novel order reduction method (SFOR) proposed in \cite{LyuVong2020diffu-wave} will be employed to construct efficient numerical scheme on nonuniform time partitions for the diffusion-wave problem \eqref{eq1} and \eqref{eq3}. The underlying idea of the SFOR method is demonstrated in the following lemma.

\begin{lemma}\cite[Lemma 2.1]{LyuVong2020diffu-wave}\label{sym-reduction}
For $\alpha\in(1,2)$ and $u(t)\in {\cal C}^2((0,T])$, it holds that
\begin{align*}
{\cal D}_t^{\alpha}u(t)={\cal D}_t^{\frac{\alpha}{2}}\left({\cal D}_t^{\frac{\alpha}{2}} u(t)\right)-u'(0)\omega_{2-\alpha}(t).
\end{align*}
Moreover, if we take ${\tilde u}(t):=u(t)-tu'(0)$, then
\begin{align*}
{\cal D}_t^\alpha {u}(t)={\cal D}_t^\alpha {\tilde u}(t)={\cal D}_t^{\frac{\alpha}{2}}\left({\cal D}_t^{\frac{\alpha}{2}} {\tilde u}(t)\right).
\end{align*}
\end{lemma}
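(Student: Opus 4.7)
The plan is to establish the first identity by a direct computation that rewrites the composition ${\cal D}_t^{\alpha/2}\circ{\cal D}_t^{\alpha/2}$ as an iterated Riemann--Liouville fractional integral acting on $u''$, and then to invoke the semigroup property of those integrals. The second identity will follow immediately from the first because the shifted function $\tilde u$ kills the initial velocity.

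First I would set $\beta:=\alpha/2\in(1/2,1)$ and introduce $v(t):={\cal D}_t^\beta u(t)=\int_0^t \omega_{1-\beta}(t-s)u'(s)\zd s$. Integrating by parts (using $u\in{\cal C}^2((0,T])$ so that $u'$ is absolutely continuous, and noting that the primitive $\omega_{2-\beta}(t)=t^{1-\beta}/\Gamma(2-\beta)$ vanishes at $t=0$ since $1-\beta>0$) gives
$$v(t)=\omega_{2-\beta}(t)\,u'(0)+\int_0^t \omega_{2-\beta}(t-s)u''(s)\zd s.$$
Differentiating this identity in $t$, using $\omega_{2-\beta}(0)=0$ together with $\omega_{2-\beta}'(t)=\omega_{1-\beta}(t)$ (both read off from the definition $\omega_{n-\alpha}(t)=t^{n-1-\alpha}/\Gamma(n-\alpha)$), yields
$$v'(t)=\omega_{1-\beta}(t)\,u'(0)+\int_0^t \omega_{1-\beta}(t-s)u''(s)\zd s.$$
I then substitute this into ${\cal D}_t^\beta v(t)=\int_0^t \omega_{1-\beta}(t-\tau)v'(\tau)\zd\tau$ and split into two pieces. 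The second is an iterated Riemann--Liouville integral of orders $1-\beta$ and $1-\beta$ applied to $u''$; by the semigroup property this equals the single Riemann--Liouville integral of order $2-2\beta=2-\alpha$ applied to $u''$, which is exactly ${\cal D}_t^\alpha u(t)$. The first piece is $u'(0)\int_0^t \omega_{1-\beta}(t-\tau)\omega_{1-\beta}(\tau)\zd\tau$; after the substitution $\tau=ts$ and the Beta identity $B(1-\beta,1-\beta)=\Gamma(1-\beta)^2/\Gamma(2-2\beta)$ this collapses to $u'(0)\omega_{2-\alpha}(t)$. Combining gives ${\cal D}_t^\beta({\cal D}_t^\beta u(t))={\cal D}_t^\alpha u(t)+u'(0)\omega_{2-\alpha}(t)$, which is exactly the first claim after rearrangement.

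For the second identity I observe that $\tilde u(t)=u(t)-tu'(0)$ satisfies $\tilde u''(t)=u''(t)$, so the definition of the Caputo derivative immediately gives ${\cal D}_t^\alpha \tilde u(t)={\cal D}_t^\alpha u(t)$. Since additionally $\tilde u'(0)=u'(0)-u'(0)=0$, applying the first identity to $\tilde u$ makes the residual $\tilde u'(0)\omega_{2-\alpha}(t)$ disappear, yielding ${\cal D}_t^\alpha \tilde u(t)={\cal D}_t^{\alpha/2}({\cal D}_t^{\alpha/2}\tilde u(t))$. The main technical point of the argument is recognizing the convolution identity $\omega_{1-\beta}\ast\omega_{1-\beta}=\omega_{2-2\beta}$ via the Beta function; once this and the Riemann--Liouville semigroup are in hand, the remainder is straightforward bookkeeping with the boundary term from integration by parts.
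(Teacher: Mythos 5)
Your proof is correct, and it follows the standard route: the paper itself does not prove this lemma but imports it from \cite[Lemma 2.1]{LyuVong2020diffu-wave}, and your computation---integration by parts to isolate the boundary term $u'(0)\,\omega_{2-\beta}(t)$ inside ${\cal D}_t^{\alpha/2}u$, followed by the convolution identity $\omega_{1-\beta}\ast\omega_{1-\beta}=\omega_{2-\alpha}$ via the Beta function---is precisely the argument of that reference. The only caveat is that $u\in{\cal C}^2((0,T])$ alone does not guarantee that $u'(0)$ exists or that $u''$ is integrable near $t=0$; as in the source, these must be read as implicit standing assumptions so that both sides of the identity are well defined.
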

Utilizing Lemma \ref{sym-reduction}, the equation \eqref{eq1} can be rewritten as ($\beta=\alpha/2$ here)
\begin{align}\label{eq4}
&{\cal D}_t^\beta v={\cal A}{\tilde u} +f({\bf x},t)+{\cal A}(t\psi),\\\label{eq4-2}
&v={\cal D}_t^\beta {\tilde u},
\end{align}
with $ {\tilde u}=u-t\psi$, for  ${\bf x}\in\Omega$ and $t\in(0,T]$.

It is obvious that the problem \eqref{eq4}--\eqref{eq4-2} is equivalently to 
\eqref{eq1} and \eqref{eq3} provided $u(\cdot,t)\in{\cal C}^2((0,T])$ and $p$ is invertible, i.e., they have the  same analytical solution. Then we can design the numerical approximation based on the model \eqref{eq4}--\eqref{eq4-2} in order to solve the original problem \eqref{eq1} and \eqref{eq3}.

By using the discrete Caputo formula \eqref{dis-Caputo} and the discrete operator ${\cal A}_h^{n-\theta}$ given in Section \ref{new-technique}, with ${\tilde u}_h^n=u_h^n-t_n\psi_h$, we propose the following implicit numerical scheme for solving \eqref{eq4}--\eqref{eq4-2}:
\begin{align}\label{dw-sc1}
&({\cal D}_{\tau}^\beta v_h)^{n-\theta}={\cal A}_h^{n-\theta} {\tilde u}_h^{n-\theta}+f_h^{n-\theta}+[{\cal A}(t\psi)]_h^{n-\theta}, \quad  {\bf x}_h\in \Omega_h, 1\leq n\leq N;\\\label{dw-sc2}
& v_h^{n-\theta}=({\cal D}_{\tau}^\beta {\tilde u}_h)^{n-\theta},\quad  {\bf x}_h\in \Omega_h, 1\leq n\leq N;
\end{align}
 subject to the zero boundary conditions and initial conditions $u_h^0=\phi_h$ and $v_h^0=0$.


Denote ${\tilde \psi}_h^{n-\theta}:=[{\cal A}(t\psi)]_h^{n-\theta}$, and
\begin{align*}
{\bf \Psi}^{n-\theta}:=\diag\left({\tilde \psi}^{n-\theta}_{1,1},\cdots,{\tilde \psi}^{n-\theta}_{M_x-1,1},{\tilde \psi}^{n-\theta}_{1,2},\cdots,{\tilde \psi}^{n-\theta}_{M_x-1,2},\cdots\cdots,{\tilde \psi}^{n-\theta}_{1,M_y-1},\cdots,{\tilde \psi}^{n-\theta}_{M_x-1,M_y-1}\right).
\end{align*}
The matrix form of the numerical scheme \eqref{dw-sc1}--\eqref{dw-sc2} is:
\begin{align}\label{dw-matrix-form-1}
&({\cal D}_{\tau}^\beta {\bf v})^{n-\theta}=\left[-({\bf P}^{n-\theta})^{-1}{\bf A}^{n-\theta}+{\bf B}^{n-\theta}+{\bf C}^{n-\theta}\right]{\tilde{\bf u}}^{n-\theta}+{\bf f}^{n-\theta}+{\bf \Psi}^{n-\theta};\\\label{dw-matrix-form-2}
& {\bf v}^{n-\theta}=({\cal D}_{\tau}^\beta {\tilde{\bf u}})^{n-\theta};
\end{align}
for ${\bf x}_h\in \Omega_h,~ 1\leq n\leq N$.

\subsection{Stability and convergence}
In the same way as Lemma \ref{gronwall-lemma1}, we can also simply go through the proof of \cite[Theorem 3.1]{LiaoGronwall} to have an analogy version of the discrete fractional Gr\"{o}nwall inequality (with $\pi_A=11/4$):
\begin{lemma}\label{gronwall-lemma2}
Let $(g^n)_{n=1}^N$ and $(\lambda_l)_{l=0}^{N-1}$ be given nonnegative sequences. Assume that there exists a constant $\Lambda$ (independent of the step sizes) such that $\Lambda\geq\sum_{l=0}^{N-1}\lambda_l$, and that the maximum step size satisfies
$$\max_{1\leq n\leq N}\tau_n\leq \frac1{{^\beta\sqrt{4\pi_A\Gamma(2-\beta)\Lambda}}}.$$
Then, for any nonnegative sequence $(u^k)_{k=0}^N$, $(v^k)_{k=0}^N$ and $(w^k)_{k=0}^N$  satisfying
\begin{align*}
\sum_{k=1}^nA_{n-k}^{(n)}\nabla_{\tau} \left[(u^k)^2+(v^k)^2+(w^k)^2 \right] \leq& \sum_{k=1}^n \lambda_{n-k}\left(u^{k-\theta}+v^{k-\theta}+w^{k-\theta}\right)^2\\
&+ (u^{n-\theta}+v^{n-\theta}+w^{n-\theta})g^{n}, \quad  1\leq n\leq N,
\end{align*}
it holds that
\begin{align*}
u^n+v^n+w^n\leq  6E_{\beta}(6\max(1,\rho)\pi_A\Lambda t_n^\beta)\left(u^0+v^0+w^0+\max_{1\leq k\leq n}\sum_{j=1}^kP_{k-j}^{(k)}g^{j} \right)\quad \mbox{for}~1\leq n\leq N.
\end{align*}
\end{lemma}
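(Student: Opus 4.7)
The plan is to mirror, step for step, the proof of Lemma~\ref{gronwall-lemma1} (i.e., \cite[Lemma~3.2]{LyuVong2020diffu-wave}), which itself follows \cite[Theorem~3.1]{LiaoGronwall}. The generalization from two to three nonnegative sequences is purely a matter of bookkeeping: the only essential change is that the elementary bound $(u^{k-\theta}+v^{k-\theta})^{2}\le 2\bigl[(u^{k-\theta})^{2}+(v^{k-\theta})^{2}\bigr]$ used in the two-sequence case is replaced by $(u^{k-\theta}+v^{k-\theta}+w^{k-\theta})^{2}\le 3\bigl[(u^{k-\theta})^{2}+(v^{k-\theta})^{2}+(w^{k-\theta})^{2}\bigr]$, which upgrades the constants $4=2\cdot 2$ appearing in Lemma~\ref{gronwall-lemma1} to $6=2\cdot 3$ here.

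Concretely I would proceed as follows. Set $X^{k}:=(u^{k})^{2}+(v^{k})^{2}+(w^{k})^{2}$ and $E^{k}:=u^{k}+v^{k}+w^{k}$, and adopt the convention $X^{k-\theta}:=(1-\theta)X^{k}+\theta X^{k-1}$, so that convexity yields $(u^{k-\theta})^{2}+(v^{k-\theta})^{2}+(w^{k-\theta})^{2}\le X^{k-\theta}$. Applying the Cauchy--Schwarz bound above to the $\lambda_{n-k}$-term recasts the hypothesis as
$$\sum_{k=1}^{n}A_{n-k}^{(n)}\nabla_{\tau}X^{k} \le 3\sum_{k=1}^{n}\lambda_{n-k}X^{k-\theta} + E^{n-\theta}g^{n}.$$
Next, I would convolve both sides with the complementary kernel $P_{n-j}^{(n)}$ and use the telescoping identity $\sum_{j=k}^{n}P_{n-j}^{(n)}A_{j-k}^{(j)}=1$ (the discrete duality underlying \eqref{P-proper} and established in \cite{LiaoGronwall}); this collapses the left-hand side to $X^{n}-X^{0}$, producing a scalar inequality for $X^{n}$ of exactly the form treated in \cite[Theorem~3.1]{LiaoGronwall}. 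Iterating that inequality and resumming via the Mittag-Leffler series then yields a bound on $\sqrt{X^{n}}$; the final conversion to $E^{n}$ via $E^{n}\le\sqrt{3X^{n}}$ supplies the remaining factor and delivers the claimed constant $6$ both as the prefactor of $E_{\beta}(\cdot)$ and inside its argument.

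The main obstacle, and the only place where the generalization is not purely notational, is tracking the Mittag-Leffler iteration under the enlarged factor $3$ in front of the $\lambda$-convolution. One has to verify that the step-size restriction $\max_{n}\tau_{n}\le(4\pi_{A}\Gamma(2-\beta)\Lambda)^{-1/\beta}$ still suffices to absorb the diagonal contribution $\lambda_{0}$ (which after reduction comes with a $3$ in front), so that the nested levels of the iteration contract and the series resums to $E_{\beta}\bigl(6\max(1,\rho)\pi_{A}\Lambda t_{n}^{\beta}\bigr)$. This step is carried out exactly as in \cite[Theorem~3.1]{LiaoGronwall}; the three-sequence setting only enlarges the numerical constants, not the structure of the argument, which is why the author writes that the proof can be obtained by simply going through \cite[Theorem~3.1]{LiaoGronwall}.
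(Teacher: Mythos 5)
Your proposal is correct and follows essentially the same route as the paper, which itself offers no detailed proof but merely notes that one can ``simply go through'' the argument of \cite[Theorem 3.1]{LiaoGronwall} exactly as was done for the two-sequence case in Lemma \ref{gronwall-lemma1}; your identification of the single substantive change --- replacing the Cauchy--Schwarz factor $2$ by $3$, which upgrades the constants from $4=2\cdot 2$ to $6=2\cdot 3$ in both the prefactor and the argument of $E_{\beta}$ --- is precisely the intended modification. No further comparison is needed.
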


Similar to \eqref{P-proper}, the discrete complementary convolution kernels $P_{n-j}^{(n)}$ in the above lemma fulfill
\begin{align*}
0\leq P_{n-j}^{(n)}\leq \pi_A\Gamma(2-\beta)\tau_j^\beta,\quad \sum_{j=1}^nP_{n-j}^{(n)}\omega_{1-\beta}(t_j)\leq \pi_A,\quad 1\leq j\leq n\leq N.
\end{align*}

\begin{theorem}\label{Stability-DW}
If {\bf V1} is valid, the numerical scheme \eqref{dw-sc1}--\eqref{dw-sc2} is unconditionally stable and the discrete solutions $u_h^n~({\bf x}_h\in \Omega_h,1\leq n\leq N)$  satisfy
\begin{align}\nonumber
\|\nabla_h u^n\|\leq& C \left[ \|\nabla_h \varphi\|+t_n\|\nabla_h\psi\|+\max_{1\leq k\leq n}\sum_{j=1}^k P_{k-j}^{(k)} (\|f^{j-\theta}\|+\|{\tilde \psi}^{j-\theta}\|)\right]\\\label{DW-stb}
 \leq& C \left[\|\nabla_h \varphi \|+t_n\|\nabla_h\psi\|+\max_{1\leq k\leq n}\{t_k^{\frac{\alpha}{2}} (\|f^{k-\theta}\|+\|{\tilde \psi}^{k-\theta}\|)\}\right].
\end{align}
\end{theorem}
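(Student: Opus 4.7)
The plan is to mirror the proof of Theorem \ref{Stability-sub} but to accommodate the coupled system \eqref{dw-matrix-form-1}--\eqref{dw-matrix-form-2}. The key device is to test \eqref{dw-matrix-form-1} against $\mathbf{P}^{n-\theta}\mathbf{v}^{n-\theta}$ so that the principal term on the right, $-({\bf P}^{n-\theta})^{-1}{\bf A}^{n-\theta}\tilde{\bf u}^{n-\theta}$, pairs with $\mathbf{v}^{n-\theta}$ into $(\mathbf{v}^{n-\theta})^T\mathbf{A}^{n-\theta}\tilde{\mathbf{u}}^{n-\theta}$; then using that $\mathbf{A}^{n-\theta}$ is symmetric and substituting $\mathbf v^{n-\theta}=({\cal D}_\tau^\beta \tilde{\mathbf u})^{n-\theta}$ from \eqref{dw-matrix-form-2}, this term rewrites as $(\tilde{\mathbf u}^{n-\theta})^T\mathbf{A}^{n-\theta}({\cal D}_\tau^\beta \tilde{\mathbf u})^{n-\theta}$, which is exactly the shape controlled by \eqref{uADu}.

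Next I apply \eqref{uPDu} to the $\mathbf v$-side term $2(\mathbf{v}^{n-\theta})^T\mathbf{P}^{n-\theta}({\cal D}_\tau^\beta \mathbf v)^{n-\theta}$ and \eqref{uADu} to the rewritten coupling term. Adding them furnishes a lower bound of the form
\begin{align*}
\sum_{k=1}^n A_{n-k}^{(n)}\nabla_\tau\Bigl[(\mathbf v^k)^T\mathbf P^{k-\theta}\mathbf v^k+(\tilde{\mathbf u}_x^k)^T\mathbf P_1^{k-\theta}\tilde{\mathbf u}_x^k+(\tilde{\mathbf u}_y^k)^T\mathbf P_2^{k-\theta}\tilde{\mathbf u}_y^k\Bigr].
\end{align*}
For the remaining right-hand side I reuse the estimates from \eqref{stab2-1}: the bounds on $\hat S_x,\hat S_x^T$ by $S_x$ control $(\mathbf B^{n-\theta}\tilde{\mathbf u}^{n-\theta})^T(\mathbf B^{n-\theta}\tilde{\mathbf u}^{n-\theta})$ by a multiple of $(\tilde{\mathbf u}_x^{n-\theta})^T\mathbf P_1^{n-\theta}\tilde{\mathbf u}_x^{n-\theta}+(\tilde{\mathbf u}_y^{n-\theta})^T\mathbf P_2^{n-\theta}\tilde{\mathbf u}_y^{n-\theta}$, and the discrete Poincaré bound \eqref{embedding-ine} handles $\mathbf C^{n-\theta}\tilde{\mathbf u}^{n-\theta}$. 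A Cauchy--Schwarz splitting against $\|\mathbf v^{n-\theta}\|_P$ then yields a bound of the form $C\bigl(\|\mathbf v^{n-\theta}\|_P^2+\|\delta_x\tilde u^{n-\theta}\|_{P_1}^2+\|\delta_y\tilde u^{n-\theta}\|_{P_2}^2\bigr)+\|\mathbf v^{n-\theta}\|_P\bigl(\|f^{n-\theta}\|+\|\tilde\psi^{n-\theta}\|\bigr)$.

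Multiplying by $h_xh_y$, moving the $(n-\theta)$ combination onto the norms exactly as in the proof of Theorem \ref{Stability-sub}, and invoking the three-sequence discrete fractional Gr\"onwall inequality (Lemma \ref{gronwall-lemma2}) with $u^k=\|v^k\|_P$, $v^k=\|\delta_x\tilde u^k\|_{P_1}$, $w^k=\|\delta_y\tilde u^k\|_{P_2}$ produces
\begin{align*}
\|v^n\|_P+\|\delta_x\tilde u^n\|_{P_1}+\|\delta_y\tilde u^n\|_{P_2}\le C\Bigl[\|\nabla_h\tilde u^0\|+\|v^0\|+\max_{1\le k\le n}\sum_{j=1}^kP_{k-j}^{(k)}\bigl(\|f^{j-\theta}\|+\|\tilde\psi^{j-\theta}\|\bigr)\Bigr].
\end{align*}
Since $v^0=0$, $\tilde u^0=u^0=\varphi$, the equivalence between $\|\cdot\|$ and the weighted $\|\cdot\|_{P_k}$ norms gives the bound for $\|\nabla_h\tilde u^n\|$; the triangle inequality applied to $u^n=\tilde u^n+t_n\psi$ then introduces the $t_n\|\nabla_h\psi\|$ term, and the second inequality in \eqref{DW-stb} follows from \eqref{P-proper} (with $\alpha$ replaced by $\beta=\alpha/2$ the exponent in $t_k^{\alpha/2}$ appears).

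The main obstacle I anticipate is the symmetry exploitation step: producing the crucial identity $(\mathbf v^{n-\theta})^T\mathbf A^{n-\theta}\tilde{\mathbf u}^{n-\theta}=(\tilde{\mathbf u}^{n-\theta})^T\mathbf A^{n-\theta}({\cal D}_\tau^\beta\tilde{\mathbf u})^{n-\theta}$ requires both the symmetry of $\mathbf A^{n-\theta}$ (guaranteed because $\mathbf P_1^{n-\theta}$ and $\mathbf P_2^{n-\theta}$ are diagonal) and the discrete SFOR identity \eqref{dw-matrix-form-2} at the same offset $n-\theta$. The remainder is routine once one recognises that the non-increasing-in-$t$ property of $p,p_1,p_2$ (Lemma \ref{functionP}) is precisely what licenses the passage from \eqref{uPDu}/\eqref{uADu} to a sum with weights evaluated at the running index $k-\theta$, which is essential to absorb the lower-order right-hand-side contributions into the Gr\"onwall input.
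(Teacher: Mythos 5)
Your proposal is correct and follows essentially the same route as the paper's proof: testing \eqref{dw-matrix-form-1} against ${\bf P}^{n-\theta}{\bf v}^{n-\theta}$, using the symmetry of ${\bf A}^{n-\theta}$ together with \eqref{dw-matrix-form-2} to convert the coupling term into $({\tilde{\bf u}}^{n-\theta})^T{\bf A}^{n-\theta}({\cal D}_{\tau}^\beta {\tilde{\bf u}})^{n-\theta}$, invoking \eqref{uPDu} and \eqref{uADu}, bounding the ${\bf B}$ and ${\bf C}$ contributions via \eqref{stab2-1} and \eqref{embedding-ine}, and closing with the three-sequence Gr\"onwall inequality of Lemma \ref{gronwall-lemma2} followed by $\|\nabla_h u^n\|\leq\|\nabla_h\tilde u^n\|+t_n\|\nabla_h\psi\|$ and \eqref{P-proper}. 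No gaps to report.
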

\begin{proof}
Multiplying both sides of \eqref{dw-matrix-form-1} by $({\bf P}^{n-\theta}{\bf v}^{n-\theta})^T$  yields
\begin{align}\nonumber
&({\bf v}^{n-\theta})^T{\bf P}^{n-\theta}({\cal D}_{\tau}^\beta {\bf v})^{n-\theta}+({\bf v}^{n-\theta})^T{\bf A}^{n-\theta}{\tilde{\bf u}}^{n-\theta}\\\label{stb1-DW}
=&({\bf v}^{n-\theta})^T{\bf P}^{n-\theta}\left({\bf B}^{n-\theta}+{\bf C}^{n-\theta}\right){\tilde{\bf u}}^{n-\theta}+({\bf v}^{n-\theta})^T{\bf P}^{n-\theta}({\bf f}^{n-\theta}+{\bf \Psi}^{n-\theta}).
\end{align}
On the other hand, the multiplication of  $({\tilde{\bf u}}^{n-\theta})^T{\bf A}^{n-\theta}$ on both sides of \eqref{dw-matrix-form-2} gives
\begin{align}\label{stb2-DW}
({\tilde{\bf u}}^{n-\theta})^T{\bf A}^{n-\theta}{\bf v}^{n-\theta}=({\tilde{\bf u}}^{n-\theta})^T{\bf A}^{n-\theta}({\cal D}_{\tau}^\beta {\tilde{\bf u}})^{n-\theta}.
\end{align}
Thus, it follows from \eqref{stb1-DW} and \eqref{stb2-DW} that
\begin{align}\nonumber
&({\bf v}^{n-\theta})^T{\bf P}^{n-\theta}({\cal D}_{\tau}^\beta {\bf v})^{n-\theta}+({\tilde{\bf u}}^{n-\theta})^T{\bf A}^{n-\theta}({\cal D}_{\tau}^\beta {\tilde{\bf u}})^{n-\theta}\\\label{stb3-DW}
=&({\bf v}^{n-\theta})^T{\bf P}^{n-\theta}\left({\bf B}^{n-\theta}+{\bf C}^{n-\theta}\right){\tilde{\bf u}}^{n-\theta}+({\bf v}^{n-\theta})^T{\bf P}^{n-\theta}({\bf f}^{n-\theta}+{\bf \Psi}^{n-\theta}).
\end{align}
The first and second terms on the left-hand side of \eqref{stb3-DW} are evaluated by means of \eqref{uPDu} and \eqref{uADu}, respectively. Then we consider the terms on the right-hand side. Applying the Cauchy-Schwarz inequality and utilizing \eqref{stab2-1}, we have
\begin{align}\nonumber
&2({\bf v}^{n-\theta})^T{\bf P}^{n-\theta}{\bf B}^{n-\theta}{\tilde{\bf u}}^{n-\theta}\\\nonumber
\leq&({\bf v}^{n-\theta})^T({\bf P}^{n-\theta})^2{\bf v}^{n-\theta}+
({\tilde{\bf u}}^{n-\theta})^T\left[({\bf B}^{n-\theta})^T{\bf B}^{n-\theta}\right]{\tilde{\bf u}}^{n-\theta}\\\label{stb4-DW}
\leq&\frac1{{\tilde C}_0}({\bf v}^{n-\theta})^T{\bf P}^{n-\theta}{\bf v}^{n-\theta}+ C_5\left[ ({\tilde{\bf u}}_x^{n-\theta})^T{\bf P}_1^{n-\theta}{\tilde{\bf u}}_x^{n-\theta}+({\tilde{\bf u}}_y^{n-\theta})^T{\bf P}_2^{n-\theta}{\tilde{\bf u}}_y^{n-\theta} \right].
\end{align}
With the embedding inequality \eqref{embedding-ine}, one  has
\begin{align}\nonumber
&2({\bf v}^{n-\theta})^T{\bf P}^{n-\theta}{\bf C}^{n-\theta}{\tilde{\bf u}}^{n-\theta}\\\nonumber
\leq&({\bf v}^{n-\theta})^T({\bf P}^{n-\theta})^2{\bf v}^{n-\theta}+
({\tilde{\bf u}}^{n-\theta})^T({\bf C}^{n-\theta})^2 {\tilde{\bf u}}^{n-\theta}\\\nonumber
\leq&\frac1{{\tilde C}_0}({\bf v}^{n-\theta})^T{\bf P}^{n-\theta}{\bf v}^{n-\theta}+ C_3^2C_\Omega^2\left[ ({\tilde{\bf u}}_x^{n-\theta})^T{\tilde{\bf u}}_x^{n-\theta}+({\tilde{\bf u}}_y^{n-\theta})^T{\tilde{\bf u}}_y^{n-\theta} \right]\\\label{stb5-DW}
\leq&\frac1{{\tilde C}_0}({\bf v}^{n-\theta})^T{\bf P}^{n-\theta}{\bf v}^{n-\theta}+ C_6\left[ ({\tilde{\bf u}}_x^{n-\theta})^T{\bf P}_1^{n-\theta}{\tilde{\bf u}}_x^{n-\theta}+({\tilde{\bf u}}_y^{n-\theta})^T{\bf P}_2^{n-\theta}{\tilde{\bf u}}_y^{n-\theta} \right].
\end{align}
Hence, from \eqref{stb3-DW}--\eqref{stb5-DW}, \eqref{uPDu} and \eqref{uADu}, we obtain
\begin{align}\nonumber
&\sum_{k=1}^nA_{n-k}^{(n)}\nabla_\tau \left[({\bf v}^k)^T{\bf P}^{k-\theta}{\bf v}^k +({\tilde{\bf u}}_x^k)^T{\bf P}_1^{k-\theta}{\tilde{\bf u}}_x^k+ ({\tilde{\bf u}}_y^k)^T{\bf P}_2^{k-\theta}{\tilde{\bf u}}_y^k\right]\\\nonumber
\leq&2\max\{\frac1{{\tilde C}_0}, C_5,C_6\}\left[({\bf v}^{n-\theta})^T{\bf P}^{n-\theta}{\bf v}^{n-\theta}+ ({\tilde{\bf u}}_x^{n-\theta})^T{\bf P}_1^{n-\theta}{\tilde{\bf u}}_x^{n-\theta}+({\tilde{\bf u}}_y^{n-\theta})^T{\bf P}_2^{n-\theta}{\tilde{\bf u}}_y^{n-\theta} \right]\\\label{stb6-DW}
&+ 2\left[({\bf P}^{n-\theta})^{\frac12}{\bf v}^{n-\theta}\right]^T({\bf P}^{n-\theta})^{\frac12} ({\bf f}^{n-\theta}+{\bf \Psi}^{n-\theta}).
\end{align}
Multiplying both sides of \eqref{stb6-DW} by $h_xh_y$ and taking $C_8:=2\max\{\frac1{{\tilde C}_0}, C_5,C_6\}$, we further get
\begin{align}\nonumber
&\sum_{k=1}^nA_{n-k}^{(n)}\nabla_\tau \left[\|v^k\|_P^2 +\|\delta_x {\tilde u}^k\|_{P_1}^2+ \|\delta_y {\tilde u}^k\|_{P_2}^2 \right]\\\nonumber
\leq& C_8\left(\|v^{n-\theta}\|_P^2+ \|\delta_x {\tilde u}^{n-\theta}\|_{P_1}^2+\|\delta_y {\tilde u}^{n-\theta}\|_{P_2}^2\right)
+\sqrt{\hat C_0}\|v^{n-\theta}\|_P\cdot \|f^{n-\theta}+{\tilde \psi}^{n-\theta}\|\\\nonumber
\leq&C_8\left[ \left(\|v\|_P^{(n-\theta)}\right)^2+ \left(\|\delta_x {\tilde u}\|_{P_1}^{(n-\theta)}\right)^2+\left(\|\delta_y {\tilde u}\|_{P_2}^{(n-\theta)}\right)^2\right]\\\label{stb7-DW}
&+\sqrt{\hat C_0} \left(\|v\|_P^{(n-\theta)}+\|\delta_x {\tilde u}\|_{P_1}^{(n-\theta)}+\|\delta_y {\tilde u} \|_{P_2}^{(n-\theta)}\right) (\|f^{n-\theta}\|+\|{\tilde \psi}^{n-\theta}\|),
\end{align}
where $\|v^k\|_P^2:=h_xh_y({\bf v}^k)^T{\bf P}^{k-\theta}{\bf v}^k$.

Now, combining \eqref{stb7-DW} with the fractional Gr\"{o}nwall inequality (Lemma \ref{gronwall-lemma2}), it follows
\begin{align*}
\|v^n\|_{P} +\|\delta_x {\tilde u}^n\|_{P_1} + \|\delta_y {\tilde u}^n\|_{P_2}
\leq& 6E_{\beta}(6\max(1,\rho)\pi_AC_8 t_n^\beta) \Big[\|v^0\|_{P}+\|\delta_x {\tilde u}^0\|_{P_1} + \|\delta_y {\tilde u}^0\|_{P_2}\\
&+\sqrt{\hat C_0}\max_{1\leq k\leq n}\sum_{j=1}^k P_{k-j}^{(k)} (\|f^{j-\theta}\|+\|{\tilde \psi}^{j-\theta}\|)\Big],
\end{align*}
and hence
\begin{align*}
\|\nabla_h {\tilde u}^n\|\leq \|\delta_x {\tilde u}^n\|+\|\delta_y {\tilde u}^n\|
\leq& \max\{\frac1{\sqrt{{\tilde C}_1}},\frac1{\sqrt{{\tilde C}_2}}\}\cdot\left( \|\delta_x {\tilde u}^n\|_{P_1} + \|\delta_y {\tilde u}^n\|_{P_2}\right)\\
\leq& C \left[\|v^0\|_{P}+ \|\nabla_h {\tilde u}^0\|+\max_{1\leq k\leq n}\sum_{j=1}^k P_{k-j}^{(k)} (\|f^{j-\theta}\|+\|{\tilde \psi}^{j-\theta}\|)\right]\\
\leq&C \left[\|v^0\|_{P}+\|\nabla_h {\tilde u}^0\|+\max_{1\leq k\leq n}\{t_k^\beta (\|f^{k-\theta}\|+\|{\tilde \psi}^{k-\theta}\|)\}\right].
\end{align*}
Then the claimed result \eqref{DW-stb} can be reached by the properties $\|\nabla_h u^n\|\leq \|\nabla_h {\tilde u}^n\|+t_n\|\nabla_h\psi\|$, $\|v^0\|_{P}=0$ and $ \|\nabla_h {\tilde u}^0\|=\|\nabla_h \varphi\|$.

\end{proof}

The next theorem shows the convergence of proposed scheme \eqref{dw-matrix-form-1}--\eqref{dw-matrix-form-2}.
\begin{theorem}\label{Convergence-dw}
Denote $e_h^k:=u({\bf x}_h,t_k)-u_h^k~({{\bf x}_h}\in\bar\Omega_h,~0\leq k\leq N)$ . If {\bf V1}, {\bf V2}, {\bf MA} and the assumptions in \eqref{regularity1} and \eqref{regularity3} are valid, the numerical scheme \eqref{dw-sc1}--\eqref{dw-sc2} is unconditionally convergent with
\begin{equation}
\|\nabla_h e^n\|\leq
C(\tau^{\min\{2,\gamma\sigma_2,\gamma\sigma_3\}}+h_x^2+h_y^2),
\quad \mbox{for}\quad 1\leq n\leq N.
\end{equation}
\end{theorem}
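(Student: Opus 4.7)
The plan is to mirror the strategy of Theorem \ref{Stability-DW}, applied to the error system rather than the scheme itself. First I would substitute the exact solution into \eqref{dw-matrix-form-1}--\eqref{dw-matrix-form-2} to produce a two-line error system of the form
\begin{align*}
&({\cal D}_{\tau}^\beta {\bf e}_v)^{n-\theta}=\left[-({\bf P}^{n-\theta})^{-1}{\bf A}^{n-\theta}+{\bf B}^{n-\theta}+{\bf C}^{n-\theta}\right]{\tilde{\bf e}}^{n-\theta}+{\bf R}_1^{n-\theta},\\
&{\bf e}_v^{n-\theta}=({\cal D}_{\tau}^\beta {\tilde{\bf e}})^{n-\theta}+{\bf R}_2^{n-\theta},
\end{align*}
with zero initial data, where ${\tilde{\bf e}}^{n-\theta}={\tilde{\bf u}}^{n-\theta}-\tilde U^{n-\theta}$ evaluated on the mesh. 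Here ${\bf R}_1^{n-\theta}$ collects the $v$-equation truncation (the Alikhanov error for ${\cal D}_t^\beta v$ at $t_{n-\theta}$, plus the spatial error in ${\cal A}_h^{n-\theta}$ applied to $\tilde u$ and the temporal interpolation error at $t_{n-\theta}$), while ${\bf R}_2^{n-\theta}$ collects the ${\tilde u}$-equation truncation (the temporal interpolation of $v$ and the Alikhanov error for ${\cal D}_t^\beta \tilde u$). Since ${\tilde u}_h={u}_h-t\psi_h$ with the interpolation of $\psi$ exact at grid points, ${\tilde{\bf e}}$ coincides with the displacement error ${\bf e}$.

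Next I would repeat the two multiplications used in the stability proof: multiply the first error equation by $({\bf P}^{n-\theta}{\bf e}_v^{n-\theta})^T$, multiply the second by $({\tilde{\bf e}}^{n-\theta})^T{\bf A}^{n-\theta}$, and add. The left-hand sides are again controlled by \eqref{uPDu} and \eqref{uADu} (this is exactly where the technique of Section \ref{new-technique} pays off). The terms involving ${\bf B}^{n-\theta}$, ${\bf C}^{n-\theta}$ are absorbed by the same Cauchy--Schwarz bookkeeping as in \eqref{stb4-DW}--\eqref{stb5-DW}. The genuinely new inhomogeneous contributions are
$({\bf e}_v^{n-\theta})^T{\bf P}^{n-\theta}{\bf R}_1^{n-\theta}$ and $({\tilde{\bf e}}^{n-\theta})^T{\bf A}^{n-\theta}{\bf R}_2^{n-\theta}$. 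The first is dominated by $\sqrt{\hat C_0}\,\|e_v^{n-\theta}\|_P\cdot\|R_1^{n-\theta}\|$. The second, because
${\bf A}^{n-\theta}=(I_y\otimes S_x)^T{\bf P}_1^{n-\theta}(I_y\otimes S_x)+(S_y\otimes I_x)^T{\bf P}_2^{n-\theta}(S_y\otimes I_x)$, rewrites by discrete summation by parts as ${\tilde{\bf e}}_x^T{\bf P}_1^{n-\theta}({\bf R}_2)_x+{\tilde{\bf e}}_y^T{\bf P}_2^{n-\theta}({\bf R}_2)_y$ and is bounded by a constant multiple of $(\|\delta_x{\tilde e}\|_{P_1}+\|\delta_y{\tilde e}\|_{P_2})\,\|\nabla_h R_2^{n-\theta}\|$.

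At this stage the combined inequality has precisely the structure required by the three-sequence discrete fractional Gr\"onwall inequality (Lemma \ref{gronwall-lemma2}) with $u^k=\|e_v^k\|_P$, $v^k=\|\delta_x{\tilde e}^k\|_{P_1}$, $w^k=\|\delta_y{\tilde e}^k\|_{P_2}$ and driving term $g^n=C(\|R_1^{n-\theta}\|+\|\nabla_h R_2^{n-\theta}\|)$. Since all initial quantities vanish, the conclusion is
$$\|\nabla_h e^n\|\leq C\max_{1\leq k\leq n}\sum_{j=1}^{k}P_{k-j}^{(k)}\bigl(\|R_1^{j-\theta}\|+\|\nabla_h R_2^{j-\theta}\|\bigr).$$
Finally I would insert the componentwise truncation error estimates from the Appendix: the spatial part contributes $O(h_x^2+h_y^2)$; the Alikhanov temporal part, analyzed on the nonuniform mesh under \textbf{MA} together with \eqref{regularity1} and \eqref{regularity3}, contributes $\tau_j$-weighted singular bounds whose convolution with $P_{k-j}^{(k)}$ (using \eqref{P-proper} adapted to $\beta=\alpha/2$ and the standard estimate $\sum_j P_{k-j}^{(k)}\tau_j^{p}\omega_{1-\beta}(t_j)\lesssim 1$) yields the sharp rate $\tau^{\min\{2,\gamma\sigma_2,\gamma\sigma_3\}}$.

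The main obstacle I anticipate is the second step: the extra residual ${\bf R}_2$ enters through the $\mathbf{A}$-weighted form and must be controlled in the $\nabla_h$ norm, not the $L^2$ norm, so the SFOR-induced Alikhanov error has to be estimated after one spatial difference, which is where the $\sigma_3$-regularity of $v=\mathcal{D}_t^\beta\tilde u$ (rather than of $u$) is essential and why both $\sigma_2$ and $\sigma_3$ appear in the final rate.
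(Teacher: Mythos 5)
Your proposal is correct and follows essentially the same route as the paper: the paper likewise forms the coupled error system for $(\check{\bf e},{\bf e})$ with residuals $\tilde{\bf R}$ (the $v$-equation truncation, measured in the discrete $L^2$-norm) and $\hat{\bf R}$ (the $\tilde u$-equation truncation, measured in the $\nabla_h$-norm after passing through ${\bf A}^{n-\theta}$ exactly as in \eqref{stb4}), repeats the two multiplications from Theorem \ref{Stability-DW}, and closes with Lemma \ref{gronwall-lemma2} and the Appendix estimates \eqref{S}, \eqref{TA}, \eqref{nab_Ttildeu}--\eqref{nab_Tv2}. Your identification of the extra term $({\bf e}^{n-\theta})^T{\bf A}^{n-\theta}\hat{\bf R}^{n-\theta}$ as the only genuinely new step, and of the need for gradient-level bounds on the SFOR residual (hence $\sigma_3$), matches the paper's own remark precisely.
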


\begin{proof}
We have
$$e_h^k=u({\bf x}_h,t_k)-u_h^k={\tilde u}({\bf x}_h,t_k)-{\tilde u}_h^k,\quad {\bf x}_h\in\Omega_h,~ 1\leq k\leq N.$$
Denote ${\check e}_h^k:=v({\bf x}_h,t_k)-v_h^k$ ($1\leq k\leq N$) and
\begin{align}\label{TE-dw}
{\tilde R}_h^{n-\theta}:=-{\cal T}_{v1}({\bf x}_h,t_{n-\theta})+{\cal T}_A({\bf x}_h,t_{n-\theta})+{\cal S}({\bf x}_h,t_{n-\theta}),~ {\hat R}_h^{n-\theta}:=-{\cal T}_{v2}({\bf x}_h,t_{n-\theta})+{\cal T}_{\tilde u}({\bf x}_h,t_{n-\theta}),
\end{align}
for ${\bf x}_h\in\Omega_h$, where the above truncation errors are discussed in the Appendix (Subsection \ref{TEA}).

Denote the vector
$${\check{\bf e}}^k:=({\check e}_{1,1}^k,\cdots,{\check e}_{M_x-1,1}^k,\cdots,{\check e}_{1,2}^k,\cdots,{\check e}_{M_x-1,2}^k,\cdots\cdots,{\check e}_{1,M_y-1}^k,\cdots,{\check e}_{M_x-1,M_y-1}^k),$$
 while ${\tilde {\bf R}}^{n-\theta}$, ${\hat {\bf R}}^{n-\theta}$ are similarity  defined  with entries ${\tilde R}_h^{n-\theta}$ and ${\hat R}_h^{n-\theta}$, respectively.

The error equations to scheme \eqref{dw-matrix-form-1}--\eqref{dw-matrix-form-2}  can be given as
\begin{align}\label{dw-error1}
&({\cal D}_{\tau}^\beta {\check{\bf e}})^{n-\theta}=\left[-({\bf P}^{n-\theta})^{-1}{\bf A}^{n-\theta}+{\bf B}^{n-\theta}+{\bf C}^{n-\theta}\right]{\bf e}^{n-\theta}+{\tilde {\bf R}}^{n-\theta};\\\label{dw-error2}
& {\check{\bf e}}^{n-\theta}=({\cal D}_{\tau}^\beta {\bf e})^{n-\theta}+{\hat {\bf R}}^{n-\theta}.
\end{align}
The proof  of  convergence  is similar to that of Theorem \ref{Stability-DW} with a slight difference only at the step for \eqref{stb2-DW}.
We now have:
\begin{align*}
({\bf e}^{n-\theta})^T{\bf A}^{n-\theta}{\check{\bf e}}^{n-\theta}=({\bf e}^{n-\theta})^T{\bf A}^{n-\theta}({\cal D}_{\tau}^\beta {\bf e})^{n-\theta}+({\bf e}^{n-\theta})^T{\bf A}^{n-\theta}{\hat {\bf R}}^{n-\theta}.
\end{align*}
The term can be estimated like that in \eqref{stb4}. Going though the remaining part of the proof like that of Theorem \ref{Stability-DW}, one should find no difficulty to obtain
\begin{align*}
\|{\check{ e}}^n\|+\|\nabla_h e^n\|\leq& C \max_{1\leq k\leq n}\sum_{j=1}^k P_{k-j}^{(k)} (\|{\tilde R}^{j-\theta}\|+\|\nabla_h {\hat R}^{j-\theta}\|).
\end{align*}
Thus, combining with \eqref{S}, \eqref{TA} and \eqref{nab_Ttildeu}--\eqref{nab_Tv2}, the desired result is true.
\end{proof}

\section{Numerical Experiments}\label{Numerical}
Numerical examples will be provided in this section to show the accuracy and efficiency of proposed schemes \eqref{sub-sc1}--\eqref{sub-sc2} and \eqref{dw-sc1}--\eqref{dw-sc2}. The variable coefficients in the two examples in this section are chose as
\begin{align*}
&a_1({\bf x},t)=e^{x+y}(1+\cos(t)),\quad a_2({\bf x},t)=e^{(x+y)t}(1+t^{\frac32}),\\
&b_1({\bf x},t)=\sin(xyt),\quad b_2({\bf x},t)=\cos(xyt),\quad b_3({\bf x},t)=(x^2+y^2)t.
\end{align*}
The above variable coefficients satisfy {\bf V1} and {\bf V2} clearly. Then we take the  function $d({\bf x})$ and constant $C_p$ in Lemma \ref{functionP} as  follows
$$d({\bf x})=e^{\sin(x+y)}\quad \mbox{and} \quad  C_p=3.$$
Since the problem we considered in the paper are linear fractional evolution equations, we choose the classical graded mesh  $t_k=T(k/N)^\gamma$ for the time partition to compensate for the lack of smoothness of the solution near the initial time. The graded mesh is definitely  in accordance with the mesh assumption {\bf MA}. In all of the numerical tests, we take $M=M_x=M_y$, the discrete $H^1$-norm errors $E_1(M,N)=\max_{1\leq n\leq N}\|U^n-u^n\|_{H^1}$ will be recorded in each run, and the temporal and spatial convergence orders are given by
         $$\mbox{Order}_\tau=\log_2\left[\frac{E_1(M,N/2)}{E_1(M,N)}\right] \quad \mbox{and}\quad
         \mbox{Order}_h=\log_2\left[\frac{E_1(M/2,N)}{E_1(M,N)}\right],$$
respectively.

 Moreover, we will always employ the sum-of-exponentials (SOE) technique \cite{FL1} to the proposed schemes while discretizing the Caputo derivative to save the memory and computational costs, since the SOE method does not bring any additional essential differences to the numerical analysis of the nonuniform schemes. One may refer to \cite[Section 5.1]{Liao-AC2} for the details of the fast Alikhanov formula and refer to \cite{FL1,LyuVongANM2020} for the advantage of the SOE approximation in the computational aspect.
 The absolute tolerance error $\epsilon$ and the cut-off time $\Delta t$ of the fast Alikhanov formula (see \cite[Lemma 5.1]{Liao-AC2}) are set as $\epsilon=10^{-12}$ and $\Delta t=\tau_1$ in all of the following tests.

\begin{example}\label{ex1}
We consider the sub-diffusion problem \eqref{eq1}--\eqref{eq2} with $\Omega=(0,1)^2$, $T=1$, $\varphi=\sin(\pi x)\sin(\pi y)$ and
$$f(u,{\bf x},t)=\sin(\pi x)\sin(\pi y)\left[\Gamma(\alpha+1)+\frac{t^{1-\alpha}}{\Gamma(2-\alpha)}\right]-{\cal A}(\sin(\pi x)\sin(\pi y))(1+t+t^\alpha),\quad \alpha\in(0,1),$$
 such that the exact solution is $u=\sin(\pi x)\sin(\pi y)(1+t+t^\alpha)$.
\end{example}

One may notice that the regularity parameter in \eqref{regularity1} should be $\sigma_1=\alpha$ for Example \ref{ex1}. Therefore, according to Theorem \ref{Convergence-sub}, the optimal mesh parameter is $\gamma_{opt}=2/\alpha$ for the scheme \eqref{sub-sc1}--\eqref{sub-sc2} on the graded time meshes.

The temporal accuracy by applying the scheme \eqref{sub-sc1}--\eqref{sub-sc2} with fixed $M=1000$ and different parameters $\alpha,\gamma$ for solving Example \ref{ex1} is listed in Tables \ref{table1}--\ref{table3}, while Table \ref{table4} shows the spatial accuracy with fixed $N=500$. From the four tables, we can clearly observe the optimal second-order accuracy of the proposed scheme, and the optimal choice of the grading parameter ($\gamma_{opt}=2/\alpha$) is well reflected.

\begin{table}[htb!]
 \begin{center}
 \caption{Numerical accuracy  in temporal direction of the scheme \eqref{sub-sc1}--\eqref{sub-sc2} for solving Example \ref{ex1}, where $\alpha=0.5$.}\label{table1}
 \renewcommand{\arraystretch}{1}
 \def\temptablewidth{0.95\textwidth}
 {\rule{\temptablewidth}{1pt}}
 \begin{tabular*}{\temptablewidth}{@{\extracolsep{\fill}}ccccccc}
&\multicolumn{2}{c}{$\gamma=1$}  &\multicolumn{2}{c}{$\gamma_{opt}=2/\alpha=4$}  &\multicolumn{2}{c}{$\gamma=2.5/\alpha=5$}\\
  \cline{2-3}\cline{4-5}\cline{6-7}
 $N$ &  $E_1(M,N)$ &Order$_\tau$ & $E_1(M,N)$ &Order$_\tau$ & $E_1(M,N)$ &Order$_\tau$\\\hline
 $4$  & 1.6124e-01  & $\ast$   & 4.4642e-02  & $\ast$  & 6.4526e-02  & $\ast$ \\
 $8$  & 1.1090e-01  & 0.54  & 1.2036e-02  & 1.89  & 1.8154e-02  & 1.83 \\
$16$  & 7.5477e-02  & 0.56  & 3.1256e-03  & 1.95  &4.7969e-03 & 1.92 \\
$32$ & 5.0612e-02  & 0.58   &8.0038e-04  & 1.97  & 1.2989e-03 & 1.88\\\hline
Theoretical Order && 0.50 && 2.00 && 2.00
\end{tabular*}
{\rule{\temptablewidth}{1pt}}
\end{center}
\end{table}

\begin{table}[htb!]
 \begin{center}
 \caption{Numerical accuracy  in temporal direction of the scheme \eqref{sub-sc1}--\eqref{sub-sc2} for solving Example \ref{ex1}, where $\alpha=0.7$.}\label{table2}
 \renewcommand{\arraystretch}{1}
 \def\temptablewidth{0.95\textwidth}
 {\rule{\temptablewidth}{1pt}}
 \begin{tabular*}{\temptablewidth}{@{\extracolsep{\fill}}ccccccc}
&\multicolumn{2}{c}{$\gamma=1$}  &\multicolumn{2}{c}{$\gamma_{opt}=2/\alpha\approx 2.86$}  &\multicolumn{2}{c}{$\gamma=2.5/\alpha\approx3.57$}\\
  \cline{2-3}\cline{4-5}\cline{6-7}
 $N$ &  $E_1(M,N)$ &Order$_\tau$ & $E_1(M,N)$ &Order$_\tau$ & $E_1(M,N)$ &Order$_\tau$\\\hline
 $4$  & 1.0592e-01  & $\ast$   & 2.5978e-02  & $\ast$  & 3.8595e-02  & $\ast$ \\
 $8$  & 6.1506e-02  & 0.78  & 6.5510e-03  & 1.99  & 1.0043e-02  & 1.94 \\
$16$  & 3.4534e-02  & 0.83  & 1.6656e-03  & 1.98  &2.5747e-03 & 1.96 \\
$32$ &1.8403e-02  & 0.91   &4.2368e-04  & 1.98  & 6.5520e-04 & 1.97\\\hline
Theoretical Order && 0.70 && 2.00 && 2.00
\end{tabular*}
{\rule{\temptablewidth}{1pt}}
\end{center}
\end{table}

\begin{table}[htb!]
 \begin{center}
 \caption{Numerical accuracy  in temporal direction of the scheme \eqref{sub-sc1}--\eqref{sub-sc2} for solving Example \ref{ex1}, where $\alpha=0.9$.}\label{table3}
 \renewcommand{\arraystretch}{1}
 \def\temptablewidth{0.95\textwidth}
 {\rule{\temptablewidth}{1pt}}
 \begin{tabular*}{\temptablewidth}{@{\extracolsep{\fill}}ccccccc}
&\multicolumn{2}{c}{$\gamma=1$}  &\multicolumn{2}{c}{$\gamma_{opt}=2/\alpha\approx 2.22$}  &\multicolumn{2}{c}{$\gamma=2.5/\alpha\approx2.78$}\\
  \cline{2-3}\cline{4-5}\cline{6-7}
 $N$ &  $E_1(M,N)$ &Order$_\tau$ & $E_1(M,N)$ &Order$_\tau$ & $E_1(M,N)$ &Order$_\tau$\\\hline
 $4$  & 3.4213e-02  & $\ast$   & 8.0750e-03  & $\ast$  & 1.2235e-02  & $\ast$ \\
 $8$  & 1.6299e-02  & 1.07   & 1.8998e-03  & 2.09  & 2.9302e-03  & 2.06 \\
$16$  & 7.0935e-03  & 1.20  & 4.7949e-04  & 1.99  &7.4266e-04 & 1.98 \\
$32$ &2.6405e-03  & 1.43   & 1.2448e-04 & 1.95  & 1.9062e-04 & 1.96\\\hline
Theoretical Order && 0.90 && 2.00 && 2.00
\end{tabular*}
{\rule{\temptablewidth}{1pt}}
\end{center}
\end{table}

\begin{table}[htb!]
 \begin{center}
 \caption{Numerical accuracy  in spatial direction of the scheme \eqref{sub-sc1}--\eqref{sub-sc2} for solving Example \ref{ex1}, where $\alpha=0.7$.}\label{table4}
 \renewcommand{\arraystretch}{1}
 \def\temptablewidth{0.95\textwidth}
 {\rule{\temptablewidth}{1pt}}
 \begin{tabular*}{\temptablewidth}{@{\extracolsep{\fill}}ccccccc}
&\multicolumn{2}{c}{$\gamma=1$}  &\multicolumn{2}{c}{$\gamma_{opt}=2/\alpha\approx 2.86$}  &\multicolumn{2}{c}{$\gamma=2.5/\alpha\approx3.57$}\\
  \cline{2-3}\cline{4-5}\cline{6-7}
 $M$ &  $E_1(M,N)$ &Order$_h$ & $E_1(M,N)$ &Order$_h$ & $E_1(M,N)$ &Order$_h$\\\hline
 $4$  & 3.6943e-01  & $\ast$   & 3.6942e-01 & $\ast$  & 3.6931e-01  & $\ast$ \\
 $8$  & 9.1710e-02  & 2.01   & 9.1710e-02  & 2.01  & 9.1666e-02  & 2.01 \\
$16$  & 2.2891e-02  & 2.00  & 2.2891e-02  & 2.00  &2.2864e-02 & 2.00 \\
$32$ & 5.7205e-03  & 2.00   & 5.7213e-03 & 2.00  & 5.6977e-03 & 2.00\\\hline
Theoretical Order && 2.00 && 2.00 && 2.00
\end{tabular*}
{\rule{\temptablewidth}{1pt}}
\end{center}
\end{table}

\begin{example}\label{ex2}
We then consider the diffusion-wave problem \eqref{eq1} and \eqref{eq3} with $\Omega=(0,1)^2$, $T=1$, $\phi=\psi=\sin(\pi x)\sin(\pi y)$ and
$$f(u,{\bf x},t)=\Gamma(\alpha+1)\sin(\pi x)\sin(\pi y)-{\cal A}(\sin(\pi x)\sin(\pi y))(1+t+t^\alpha),\quad \alpha\in(1,2),$$
such that the exact solution is $u=\sin(\pi x)\sin(\pi y)(1+t+t^\alpha)$.
\end{example}

For Example \ref{ex2}, the regularity parameters in \eqref{regularity2} are $\sigma_2=\alpha$ and $\sigma_3=\alpha/2$. Then, Theorem \ref{Convergence-dw} indicates that the optimal mesh parameter is $\gamma_{opt}=2/\sigma_3=4/\alpha$ for the scheme \eqref{dw-sc1}--\eqref{dw-sc2} on the graded time meshes. One can notice that the grading parameter $\gamma_{opt}$ is bounded and not large while $\alpha\to1^{+}$, this keeps the robustness of the graded scheme in practical applications when the fractional order $\alpha$ is close to one.

Similarly, we display the temporal accuracy, which is obtained by applying the scheme \eqref{dw-sc1}--\eqref{dw-sc2} with fixed $M=1000$ and different parameters for solving Example \ref{ex2}, in Tables \ref{table5}--\ref{table8}.
The spatial accuracy of the scheme with fixed $N=500$ is displayed in Table \ref{table9}. The numerical results show that the proposed scheme \eqref{dw-sc1}--\eqref{dw-sc2} also works very well with optimal second-order accuracy and is robust for $\alpha\to1^{+}$ in solving the diffusion-wave problem with general variable coefficients.

\begin{table}[htb!]
 \begin{center}
 \caption{Numerical accuracy  in temporal direction of scheme \eqref{dw-sc1}--\eqref{dw-sc2} for Example \ref{ex2}, where $\alpha=1.01$.}\label{table5}
 \renewcommand{\arraystretch}{1}
 \def\temptablewidth{0.95\textwidth}
 {\rule{\temptablewidth}{1pt}}
 \begin{tabular*}{\temptablewidth}{@{\extracolsep{\fill}}ccccccc}
&\multicolumn{2}{c}{$\gamma=1$}  &\multicolumn{2}{c}{$\gamma_{opt}=4/\alpha\approx 3.96$}  &\multicolumn{2}{c}{$\gamma=4.5/\alpha\approx4.46$}\\
  \cline{2-3}\cline{4-5}\cline{6-7}
 $N$ &  $E_1(M,N)$ &Order$_\tau$ & $E_1(M,N)$ &Order$_\tau$ & $E_1(M,N)$ &Order$_\tau$\\\hline
 $4$  & 1.2885e-02  & $\ast$   & 4.7702e-03  & $\ast$  & 4.7959e-03  & $\ast$ \\
 $8$  & 1.1231e-02  & 0.66   & 1.5632e-03  & 1.85  & 1.4205e-03  & 1.76 \\
$16$  & 9.2424e-03  & 0.28  & 4.2372e-04  & 1.88  &4.0423e-04 & 1.81 \\
$32$ & 6.9173e-03  & 0.42   & 1.0616e-04 & 2.00  & 1.0064e-04 & 2.01\\\hline
Theoretical Order && 0.505 && 2.00 && 2.00
\end{tabular*}
{\rule{\temptablewidth}{1pt}}
\end{center}
\end{table}

\begin{table}[htb!]
 \begin{center}
 \caption{Numerical accuracy  in temporal direction of scheme \eqref{dw-sc1}--\eqref{dw-sc2} for Example \ref{ex2}, where $\alpha=1.1$.}\label{table6}
 \renewcommand{\arraystretch}{1}
 \def\temptablewidth{0.95\textwidth}
 {\rule{\temptablewidth}{1pt}}
 \begin{tabular*}{\temptablewidth}{@{\extracolsep{\fill}}ccccccc}
&\multicolumn{2}{c}{$\gamma=1$}  &\multicolumn{2}{c}{$\gamma_{opt}=4/\alpha\approx 3.64$}  &\multicolumn{2}{c}{$\gamma=4.5/\alpha\approx4.09$}\\
  \cline{2-3}\cline{4-5}\cline{6-7}
 $N$ &  $E_1(M,N)$ &Order$_\tau$ & $E_1(M,N)$ &Order$_\tau$ & $E_1(M,N)$ &Order$_\tau$\\\hline
 $4$  & 2.4901e-02  & $\ast$   & 1.6750e-02  & $\ast$  & 2.0056e-02  & $\ast$ \\
 $8$  & 1.5761e-02  & 0.66   & 4.6593e-03  & 1.85  & 5.7785e-03  & 1.80 \\
$16$  & 1.0245e-02  & 0.62  & 1.2195e-03  & 1.93  &1.5306e-03 & 1.92 \\
$32$ & 6.3732e-03  & 0.68   & 3.0860e-04 & 1.98  & 3.9009e-04 & 1.97\\\hline
Theoretical Order && 0.55 && 2.00 && 2.00
\end{tabular*}
{\rule{\temptablewidth}{1pt}}
\end{center}
\end{table}

\begin{table}[htb!]
 \begin{center}
 \caption{Numerical accuracy  in temporal direction of scheme \eqref{dw-sc1}--\eqref{dw-sc2} for Example \ref{ex2}, where $\alpha=1.5$.}\label{table7}
 \renewcommand{\arraystretch}{1}
 \def\temptablewidth{0.95\textwidth}
 {\rule{\temptablewidth}{1pt}}
 \begin{tabular*}{\temptablewidth}{@{\extracolsep{\fill}}ccccccc}
&\multicolumn{2}{c}{$\gamma=1$}  &\multicolumn{2}{c}{$\gamma_{opt}=4/\alpha\approx 2.67$}  &\multicolumn{2}{c}{$\gamma=4.5/\alpha=3$}\\
  \cline{2-3}\cline{4-5}\cline{6-7}
 $N$ &  $E_1(M,N)$ &Order$_\tau$ & $E_1(M,N)$ &Order$_\tau$ & $E_1(M,N)$ &Order$_\tau$\\\hline
 $4$  &5.3444e-02  & $\ast$   & 7.8413e-02  & $\ast$  & 9.6727e-02  & $\ast$ \\
 $8$  & 1.7243e-02  & 1.63   & 2.0766e-02  & 1.92  & 2.5910e-02  & 1.90 \\
$16$  & 6.1521e-03  & 1.49  & 5.3057e-03  & 1.97  &6.6772e-03 & 1.96 \\
$32$ & 2.3596e-03  & 1.38   & 1.3373e-03 & 1.99  & 1.6881e-03 & 1.98\\\hline
Theoretical Order && 0.75 && 2.00 && 2.00
\end{tabular*}
{\rule{\temptablewidth}{1pt}}
\end{center}
\end{table}

\begin{table}[htb!]
 \begin{center}
 \caption{Numerical accuracy  in temporal direction of scheme \eqref{dw-sc1}--\eqref{dw-sc2} for Example \ref{ex2}, where $\alpha=1.9$.}\label{table8}
 \renewcommand{\arraystretch}{1}
 \def\temptablewidth{0.95\textwidth}
 {\rule{\temptablewidth}{1pt}}
 \begin{tabular*}{\temptablewidth}{@{\extracolsep{\fill}}ccccccc}
&\multicolumn{2}{c}{$\gamma=1$}  &\multicolumn{2}{c}{$\gamma_{opt}=4/\alpha\approx 2.11$}  &\multicolumn{2}{c}{$\gamma=4.5/\alpha\approx2.37$}\\
  \cline{2-3}\cline{4-5}\cline{6-7}
 $N$ &  $E_1(M,N)$ &Order$_\tau$ & $E_1(M,N)$ &Order$_\tau$ & $E_1(M,N)$ &Order$_\tau$\\\hline
 $4$  & 6.1480e-02  & $\ast$   & 1.3149e-01  & $\ast$  & 1.6643e-01  & $\ast$ \\
 $8$  & 1.6140e-02  & 1.93   & 3.0479e-02  & 2.11  & 3.8947e-02  & 2.10 \\
$16$  & 4.1813e-03  & 1.95  & 7.8883e-03  & 1.95  &9.9179e-03 & 1.97 \\
$32$ & 1.1127e-03  & 1.91   & 2.0132e-03 & 1.97  & 2.5274e-03 & 1.97\\\hline
Theoretical Order && 0.95 && 2.00 && 2.00
\end{tabular*}
{\rule{\temptablewidth}{1pt}}
\end{center}
\end{table}

\begin{table}[htb!]
 \begin{center}
 \caption{Numerical accuracy  in spatial direction of scheme \eqref{dw-sc1}--\eqref{dw-sc2} for Example \ref{ex2}, where $\alpha=1.5$.}\label{table9}
 \renewcommand{\arraystretch}{1}
 \def\temptablewidth{0.95\textwidth}
 {\rule{\temptablewidth}{1pt}}
 \begin{tabular*}{\temptablewidth}{@{\extracolsep{\fill}}ccccccc}
&\multicolumn{2}{c}{$\gamma=1$}  &\multicolumn{2}{c}{$\gamma_{opt}=4/\alpha\approx 2.67$}  &\multicolumn{2}{c}{$\gamma=4.5/\alpha=3$}\\
  \cline{2-3}\cline{4-5}\cline{6-7}
 $M$ &  $E_1(M,N)$ &Order$_h$ & $E_1(M,N)$ &Order$_h$ & $E_1(M,N)$ &Order$_h$\\\hline
 $4$  & 2.4719e-01  & $\ast$   & 2.4718e-01 & $\ast$  & 2.4718e-01  & $\ast$ \\
 $8$  & 6.1357e-02  & 2.01   & 6.1352e-02  & 2.01  & 6.1349e-02  & 2.01 \\
$16$  & 1.5313e-02  & 2.00  & 1.5309e-02  & 2.00  &1.5306e-02 & 2.00 \\
$32$ & 3.8262e-03  & 2.00   & 3.8214e-03 & 2.00  & 3.8190e-03 & 2.00\\\hline
Theoretical Order && 2.00  && 2.00 && 2.00
\end{tabular*}
{\rule{\temptablewidth}{1pt}}
\end{center}
\end{table}

\section{Conclusion}\label{Conclusion}
We introduced a novel and concise technique to study numerical methods on nonuniform time partitions  for solving  time fractional evolution equations (including the sub-diffusion and diffusion-wave equations) with general time-space dependent variable coefficients. The proposed numerical schemes utilized the Alikhanov formula on nonuniform meshes. Under reasonable assumptions on the solution regularity, the variable coefficients, and weak mesh restrictions, we showed that  the nonuniform schemes are  unconditionally stable and second-order convergent with respect to discrete $H^1$-norm. The efficiency and accuracy of proposed schemes are well verified by some numerical experiments.

\section{Appendix}\label{Appendix}

\subsection{The coefficients of Alikhanov formulas}\label{L1Alikhanov}

The coefficients $A_{n-k}^{(n)}$ of the Alikhanov formula on general meshes are defined as (\cite{LiaoSecondOrder})
\begin{align*}
A_{n-k}^{(n)}:=\left\{\begin{array}{ll}
a_0^{(n)}+\rho_{n-1}b_1^{(n)},\quad & k=n,\\
a_{n-k}^{(n)}+\rho_{k-1}b_{n-k+1}^{(n)}-b_{n-k}^{(n)}, & 2\leq k\leq n-1,\\
a_{n-1}^{(n)}-b_{n-1}^{(n)},  & k=1,
\end{array}\right.\quad \mbox{for}~n\geq 2,
\end{align*}
where
\begin{align*}
&a_{n-k}^{(n)}:=\frac1{\tau_k}\int_{t_{k-1}}^{\min\{t_k,t_{n-\theta}\}}\omega_{1-\beta}(t_{n-\theta}-s)\zd s ,~1\leq k\leq n,\\
&b_{n-k}^{(n)}:=\frac2{\tau_k(\tau_k+\tau_{k+1})}\int_{t_{k-1}}^{t_k}\omega_{1-\beta}(t_{n-\theta}-s)(s-t_{k-\frac12})\zd s ,~1\leq k\leq n-1,
\end{align*}
with $\rho_k:=\tau_k/\tau_{k+1}$ being the local time step-size ratios.
It has been proved in \cite{LiaoGronwall,LiaoSecondOrder} that the discrete coefficients of the nonuniform Alikhanov formula (with $\pi_A=11/4$ and $\rho=7/4$, where $\rho:=\max_{k}\{\rho_k\}$ is the maximum step-size ratio) satisfy two basic properties:
\begin{itemize}
\item[{\bf A1.}] The discrete kernels are positive and monotone: $A_0^{(n)}\geq A_1^{(n)}\geq \cdots\geq A_{n-1}^{(n)}>0$;
\item[{\bf A2.}] The discrete kernels fulfill $A_{n-k}^{(n)}\geq \frac1{\pi_A}\int_{t_{k-1}}^{t_k}\frac{\omega_{1-\beta}(t_n-s)}{\tau_k}\zd s$ for $1\leq k\leq n\leq N$.
\end{itemize}

\subsection{The proof of \eqref{zQDz1}}\label{zQDz0}
We will go through the proof of \cite[Lemma A.1]{LiaoGronwall} to show that
\begin{align}\label{zQDz11}
&2({\bf z}^{n})^T{\bf Q}^{(n)}({\cal D}_\tau^\beta {\bf z})^{n-\theta}\geq \sum_{k=1}^nA_{n-k}^{(n)}\nabla_\tau [({\bf z}^k)^T{\bf Q}^{(n)}{\bf z}^k]+\frac{(({\cal D}_\tau^\beta {\bf z})^{n-\theta})^T{\bf Q}^{(n)}({\cal D}_\tau^\beta {\bf z})^{n-\theta}}{A_0^{(n)}},\\\label{zQDz12}
&2({\bf z}^{n-1})^T{\bf Q}^{(n)}({\cal D}_\tau^\beta {\bf z})^{n-\theta}\geq \sum_{k=1}^nA_{n-k}^{(n)}\nabla_\tau [({\bf z}^k)^T{\bf Q}^{(n)}{\bf z}^k]-\frac{(({\cal D}_\tau^\beta {\bf z})^{n-\theta})^T{\bf Q}^{(n)}({\cal D}_\tau^\beta {\bf z})^{n-\theta}}{A_0^{(n)}-A_1^{(n)}},
\end{align}
for $1\leq n\leq N$ and $A_1^{(1)}:=0$.

For fix $n$, denote
$$J_n:=2({\bf z}^{n})^T{\bf Q}^{(n)}({\cal D}_\tau^\beta {\bf z})^{n-\theta}-\sum_{k=1}^nA_{n-k}^{(n)}\nabla_\tau [({\bf z}^k)^T{\bf Q}^{(n)}{\bf z}^k].$$
Then
\begin{align*}
J_n=& \sum_{k=1}^nA_{n-k}^{(n)}\left[2({\bf z}^{n})^T{\bf Q}^{(n)}({\bf z}^{k}-{\bf z}^{k-1})-({\bf z}^{k}+{\bf z}^{k-1})^T{\bf Q}^{(n)}({\bf z}^{k}-{\bf z}^{k-1}) \right]\\
=&\sum_{k=1}^nA_{n-k}^{(n)}\left[\left(2{\bf z}^{n}-({\bf z}^{k}+{\bf z}^{k-1})\right)^T{\bf Q}^{(n)}({\bf z}^{k}-{\bf z}^{k-1}) \right]\\
=&\sum_{k=1}^nA_{n-k}^{(n)}({\bf z}^{k}-{\bf z}^{k-1})^T{\bf Q}^{(n)}({\bf z}^{k}-{\bf z}^{k-1})+2\sum_{k=1}^nA_{n-k}^{(n)}\sum_{j=k+1}^n({\bf z}^{j}-{\bf z}^{j-1})^T{\bf Q}^{(n)}({\bf z}^{k}-{\bf z}^{k-1}) \\
=&\sum_{k=1}^nA_{n-k}^{(n)}({\bf z}^{k}-{\bf z}^{k-1})^T{\bf Q}^{(n)}({\bf z}^{k}-{\bf z}^{k-1})+2\sum_{j=2}^n\sum_{k=1}^{j-1}A_{n-k}^{(n)}({\bf z}^{j}-{\bf z}^{j-1})^T{\bf Q}^{(n)}({\bf z}^{k}-{\bf z}^{k-1}).
\end{align*}
where the identity $2{\bf z}^{n}-({\bf z}^{k}+{\bf z}^{k-1})={\bf z}^{k}-{\bf z}^{k-1}+2\sum_{j=k+1}^n({\bf z}^{j}-{\bf z}^{j-1})$ has been employed in the third equality.

Next, introduce the quantities
$${\bf w}^j:=\sum_{k=1}^{j}A_{n-k}^{(n)}({\bf z}^{k}-{\bf z}^{k-1}) \quad \mbox{and}\quad
B_j:=\frac1{A_{n-j}^{(n)}}\quad \mbox{for}~1\leq j\leq n.$$
It holds that ${\bf z}^{j}-{\bf z}^{j-1}=B_j({\bf w}^j-{\bf w}^{j-1})$ for $2\leq j\leq n$, and $B_1\geq B_2\geq \cdots\geq B_n$ (according to the monotone property in {\bf A1}). Then
\begin{align*}
J_n=&B_1({\bf w}^1)^T{\bf Q}^{(n)}{\bf w}^1+\sum_{j=2}^nB_j({\bf w}^j-{\bf w}^{j-1})^T{\bf Q}^{(n)}({\bf w}^j-{\bf w}^{j-1})+2\sum_{j=2}^nB_j({\bf w}^j-{\bf w}^{j-1})^T{\bf Q}^{(n)}{\bf w}^{j-1}\\
=&B_1({\bf w}^1)^T{\bf Q}^{(n)}{\bf w}^1+\sum_{j=2}^nB_j\left[({\bf w}^j)^T{\bf Q}^{(n)}{\bf w}^j-({\bf w}^{j-1})^T{\bf Q}^{(n)}{\bf w}^{j-1} \right]\\
=&B_n({\bf w}^n)^T{\bf Q}^{(n)}{\bf w}^n+\sum_{j=1}^{n-1}(B_j-B_{j+1})({\bf w}^j)^T{\bf Q}^{(n)}{\bf w}^j\\
\geq& B_n({\bf w}^n)^T{\bf Q}^{(n)}{\bf w}^n,
\end{align*}
because ${\bf Q}^{(n)}$ is a positive definite matrix. Hence, the inequality \eqref{zQDz11} is valid since ${\bf w}^n=({\cal D}_\tau^\beta {\bf z})^{n-\theta}$ and $B_n=1/A_0^{(n)}$.  Similarly, it is easy to trace the remaining parts of \cite[Lemma A.1]{LiaoGronwall} to check  inequality \eqref{zQDz12}.

According to \cite[Lemma 4.1]{LiaoGronwall}  and \cite[Corollary 2.3]{LiaoSecondOrder}, with the maximum time-step ratio $\rho=7/4$, we have
$$\frac{1-\theta}{A_0^{(n)}}-\frac{\theta}{A_0^{(n)}-A_1^{(n)}}\geq 0,$$
which  further leads to  \eqref{zQDz1} by a simple combination of \eqref{zQDz11} and \eqref{zQDz12}.

\subsection{Truncation error analysis}\label{TEA}

The truncation errors in \eqref{TE-sub} and \eqref{TE-dw} are given by
\begin{align*}
&{\cal T}_u({\bf x}_h,t_{n-\theta}):={\cal D}_t^\alpha u({\bf x}_h,t_{n-\theta})-\left({\cal D}_\tau^\alpha u({\bf x}_h,\cdot)\right)^{n-\theta},\\
&{\cal T}_A({\bf x}_h,t_{n-\theta}):={\cal A}_h^{n-\theta}\left\{u({\bf x}_h,t_{n-\theta})-\left[(1-\theta)u({\bf x}_h,t_n)+\theta u({\bf x}_h,t_{n-1}) \right]\right\},\\
&{\cal T}_{\tilde u}({\bf x}_h,t_{n-\theta}):={\cal D}_t^\beta {\tilde u}({\bf x}_h,t_{n-\theta})-\left({\cal D}_\tau^\beta {\tilde u}({\bf x}_h,\cdot)\right)^{n-\theta},\\
&{\cal T}_v({\bf x}_h,t_{n-\theta}):={\cal D}_t^\beta v({\bf x}_h,t_{n-\theta})-\left({\cal D}_\tau^\beta v({\bf x}_h,\cdot)\right)^{n-\theta},\\
&{\cal S}({\bf x}_h,t_{n-\theta}):=({\cal A}u)({\bf x}_h,t_{n-\theta})-{\cal A}_h^{n-\theta}u({\bf x}_h,t_{n-\theta}),
\end{align*}
for ${\bf x}_h\in\Omega_h$ and $1\leq n\leq N$.

We first study the spatial error ${\cal S}({\bf x}_h,t_{n-\theta})$.
By the Taylor expansion (see also \cite[eq. (31)]{SunNMPDE2001}), we can take a continuous function ${\xi}^n({\bf x})$  such that
\begin{align*}
&{\xi}^n({\bf x}_h)=\left[\partial_x(p_1\partial_x u)+\partial_y(p_1\partial_y u)\right]({\bf x}_h,t_{n-\theta})-\left\{\delta_x[(p_1)_h^{n-\theta}\delta_x]+\delta_y[(p_1)_h^{n-\theta}\delta_y]\right\}u({\bf x}_h,t_{n-\theta}),
\end{align*}
where ${\bf x}_h\in\Omega_h$ and $1\leq n\leq N$, and $|{\xi}^n({\bf x}_h)|\leq C(h_x^2+h_y^2)$ 
provided that $\|u\|_{H^4}\leq C$ and $p_k({\bf x},\cdot)\in {\cal C}^3(\Omega)$ for $k=1,2$.

Similarly, there is a continuous function  $\eta^n({\bf x})$  such that
\begin{align*}
{\eta}^n({\bf x}_h)=\left(p_3\partial_x u+p_4\partial_y u)\right]({\bf x}_h,t_{n-\theta})-\left[(p_1)_h^{n-\theta}\delta_{\hat x}+(p_4)_h^{n-\theta}\delta_y\right]u({\bf x}_h,t_{n-\theta}),
\end{align*}
where ${\bf x}_h\in\Omega_h$ and $1\leq n\leq N$, and $|{\eta}^n({\bf x}_h)|\leq C(h_x^2+h_y^2)$ provided $\|u\|_{H^3}\leq C$ and $|p_k({\bf x},\cdot)|\leq C$ for $k=3,4$.

Hence, based on {\bf V2} and the regularity assumption \eqref{regularity1}, we have
\begin{align}\label{S}
\left|{\cal S}({\bf x}_h,t_{n-\theta})\right|={\cal O}(h_x^2+h_y^2).
\end{align}
By the Taylor expansion with integral remainder, we  further get that
$$\delta_x\xi^n(x_{i+\frac12},y_j)=\frac{1}{2}\int_0^1\left[\xi_x^n\left(x_{i+\frac12}+\frac{h_x}{2}s,y_j\right)+\xi_x^n\left(x_{i+\frac12}-\frac{h_x}{2}s,y_j\right)\right](1-s)\zd s,$$
for $0\leq i\leq M_x,~1\leq j\leq M_y-1$, and
$$\delta_y\xi^n(x_i,y_{j+\frac12})=\frac{1}{2}\int_0^1\left[\xi_y^n\left(x_i,y_{j+\frac12}+\frac{h_y}{2}s\right)+\xi_y^n\left(x_i,y_{j+\frac12}-\frac{h_y}{2}s\right)\right](1-s)\zd s,$$
for $1\leq i\leq M_x-1,~0\leq j\leq M_y$. Similar formulations work for $\delta_x\eta^n(x_{i+\frac12},y_j)$ and $\delta_y\eta^n(x_i,y_{j+\frac12})$. Thus, under the assumptions in {\bf V2} and \eqref{regularity1},  it is easy to know that
\begin{align}\label{nab_S}
\|\nabla_h {\cal S}({\bf x}_h,t_{n-\theta})\|\leq C(h_x^2+h_y^2),\quad {\bf x}_h\in\Omega_h,~1\leq n\leq N. 
\end{align}
For the temporal truncation errors, according to \cite[Lemma 6.1]{LyuVong2020diffu-wave}, we have
\begin{equation}\label{TA}
\sum_{j=1}^nP_{n-j}^{(n)}\|({\cal T}_A)^{n-\theta}\|
\leq C\tau^{\min\{2,\gamma\sigma\}}.
\end{equation}
Referring to  \cite[eqs. (6.5), (6.6) and (6.8)]{LyuVong2020diffu-wave},  similar to the estimation of $\|\nabla_h {\cal S}({\bf x}_h,t_{n-\theta})\|$, we have
\begin{align}\label{nab_TA}
&\sum_{j=1}^nP_{n-j}^{(n)}\|\nabla_h({\cal T}_A)^{n-\theta}\|
\leq C\tau^{\min\{2,\gamma\sigma\}},
\\\label{nab_Tu}
&\sum_{j=1}^nP_{n-j}^{(n)}\|\nabla_h({\cal T}_u)^{n-\theta}\| \leq
 C\tau^{\min\{3-\beta,\gamma\sigma_1\}},
\\\label{nab_Ttildeu}
&\sum_{j=1}^nP_{n-j}^{(n)}\|\nabla_h({\cal T}_{\tilde u})^{n-\theta}\| \leq
 C\tau^{\min\{3-\beta,\gamma\sigma_2\}},
\\\label{nab_Tv1}
&\sum_{j=1}^nP_{n-j}^{(n)}\|({\cal T}_{v1})^{n-\theta}\| \leq
 C\tau^{\min\{3-\beta,\gamma\sigma_3\}},
\\\label{nab_Tv2}
&\sum_{j=1}^nP_{n-j}^{(n)}\|\nabla_h({\cal T}_{v2})^{n-\theta}\|
\leq C\tau^{\min\{2,\gamma\sigma_3\}},
\end{align}
for $1\leq n\leq N$, provided that assumptions in {\bf V2} and \eqref{regularity2}--\eqref{regularity3} are valid.

\end{document}